\newcommand{\Z}{{\mathbb Z}}
\newtheorem{theorem}{Theorem}[section]\newtheorem{lemma}[theorem]{Lemma}
\newtheorem{proposition}[theorem]{Proposition}
\newtheorem{corollary}[theorem]{Corollary}
\newtheorem{remark}[theorem]{Remark}
\title{\bf Bifurcations of standing localized waves \\ on periodic graphs}
\author{Dmitry Pelinovsky$^{1}$ and Guido Schneider$^{2}$ \\
{\small $^{1}$ Department of Mathematics, McMaster University, Hamilton, Ontario, L8S 4K1, Canada} \\
{\small $^{2}$ Institut f\"{u}r Analysis, Dynamik und Modellierung,
Universit\"{a}t Stuttgart,} \\
{\small Pfaffenwaldring 57, D-70569 Stuttgart, Germany }}
\begin{document}

\maketitle

\begin{abstract}
The nonlinear Schr\"{o}dinger (NLS) equation is considered on a periodic graph subject to the
Kirchhoff boundary conditions. Bifurcations of standing localized waves for frequencies lying below
the bottom of the linear spectrum of the associated stationary Schr\"{o}dinger equation are considered
by using analysis of two-dimensional discrete maps near hyperbolic fixed points. We prove existence of two
distinct families of small-amplitude standing localized waves, which are symmetric about the two
symmetry points of the periodic graph. We also prove properties of the two families, in particular,
positivity and exponential decay. The asymptotic reduction of the two-dimensional discrete map
to the stationary NLS equation on an infinite line is discussed in the context of the homogenization
of the NLS equation on the periodic graph.
\end{abstract}

\section{Introduction}

Analysis of the nonlinear Schr\"odinger (NLS) and wave equations
with double-well, multi-well, or periodic potentials constitutes a continuously developing subject \cite{Peli}.
These nonlinear partial differential equations (PDEs) have  potential applications to many realistic problems
such as Bose--Einstein condensation, nano-technology, and photonic optics.
In many applications, a specific waveguide geometry of the spatial domain suggests
the use of metric graphs as suitable way to approximate dynamics of the nonlinear PDEs
on such spatial domains \cite{Smilyansky,Noja}. At the vertex points, where different edges of the metric graphs
are connected, boundary conditions are given to define the coupling between the wave functions along the edges.
Kirchhoff boundary conditions are commonly used to ensure continuity of the wave functions and
the flow conservation through the vertex point \cite{Kuchment}.

The subject of the NLS and wave equations on metric graphs has seen many developments in the recent years.
At the rigorous mathematical level, the emphasis has been placed on the case of
star graphs, where existence, variational properties, stability, and
scattering of nonlinear waves have been studied, e.g. in \cite{ACFN0,ACFN2,Banica}.
Nonlinear waves in more complex graphs have been studied only very recently.
Variational results on the non-existence of ground states in unbounded graphs
with closed cycles is given in \cite{AST,AST2} under a set of certain topological conditions.
Bifurcation and stability of nonlinear waves on tadpole and dumbbell graphs were studied
in \cite{CFN,MP,Pelin} by using methods of bifurcation theory.

Periodic metric graphs arise in many contexts such as carbon nanotubes and graphene.
Spectral properties of the periodic graphs were studied in many details \cite{KL,KZ,Niikuni}.
Generalized Floquet--Bloch theory is introduced for periodic graphs in a similar fashion
to the study of Schr\"{o}dinger operators with bounded periodic potentials
\cite{Kuchment}. However, the periodic graphs represent a more challenging and fascinating subject
since the effective periodic potentials are defined in spaces of lower regularity.
As a result, more exotic phenomena arise such as the presence of embedded eigenvalues of infinite multiplicities
inside the Floquet--Bloch spectral bands \cite{KL,KZ}. In our recent work \cite{GilgPS},
we showed how to apply the spectral Floquet--Bloch decomposition for the periodic graphs
in order to analyze propagation of nonlinear waves on such graphs.

In nonlinear PDEs with smooth periodic potentials, localization of standing waves
is quite common for the frequencies occurring in the spectral gaps of the associated linear operators \cite{Peli}.
Mathematical justification of such standing localized waves in the smooth periodic potentials
is now well-understood in the tight-binding approximation \cite{Curtis,PS1,PS2}
and in the envelope approximation near the spectral edges \cite{BSTU06ZAMP,DPS,BoazWein}.

In the present work, we are interested to characterize standing localized waves on
periodic graphs near the bifurcation points. However, compared
to the tight-binding and envelope approximations, we would like to explore the
discrete nature of the periodic graphs. Consequently, we reduce the existence
of standing localized waves in the NLS equation on the periodic graph to the existence
of homoclinic orbits of the two-dimensional discrete map.
We will establish the equivalence between the differential equations on the periodic graphs
and the difference equations, which holds for all frequencies below the lowest spectral band
of the associated linear operator. In order to deduce definite results
on existence of standing localized waves on the periodic graphs, we will
use the proximity of the frequencies of the standing waves to those for the spectral edge.

Let us consider the following NLS equation
\begin{equation} \label{NLS}
i \partial_t u =  \partial_x^2 u + 2 |u|^2 u, \quad u(x,t) : \Gamma \times  \mathbb{R} \to \mathbb{C},
\end{equation}
on the periodic graph $\Gamma$ shown in Figure \ref{qgfig1}.
The same periodic graph and its modifications was considered
in the previous literature within the linear spectral theory of the associated
stationary Schr\"{o}dinger operator \cite{KL,KZ,Niikuni}.

\begin{figure}[htbp]
   \centering
\includegraphics[width=4in]{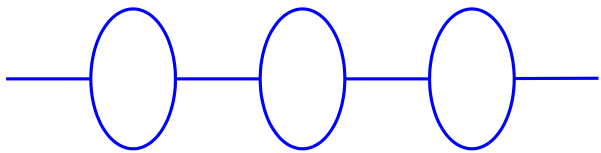}
   \caption{A schematic representation of the periodic quantum graph $\Gamma$.}
   \label{qgfig1}
\end{figure}

In the recent work \cite{GilgPS}, using the Floquet--Bloch spectral transform and energy methods, we
have addressed the time evolution problem associated with the NLS equation (\ref{NLS}) on the
periodic graph $\Gamma$ and justified the most universal approximation of the modulated wave packets
given by the homogeneous NLS equation
\begin{equation} \label{nls0}
i \partial_T \Psi = \beta \partial_X^2 \Psi + \gamma |\Psi|^2 \Psi, \quad \Psi(X,T) : \mathbb{R} \times \mathbb{R} \to \mathbb{C},
\end{equation}
where $\beta$ and $\gamma$ are specific numerical coefficients computed
by means of the Floquet--Bloch spectral theory and $\Psi$ is the envelope function
in slow spatial and temporal variables $(X,T)$
for the leading-order term in the Floquet--Bloch decomposition.

In the present work, we will consider bound states of the stationary NLS equation
\begin{equation}
\label{statNLS}
-\partial_x^2 \phi - 2 |\phi|^{2} \phi =  \Lambda \phi \qquad \Lambda \in \mathbb{R}\,,\;\phi(x) : \Gamma \to \mathbb{R},
\end{equation}
which arise for the standing waves $u(x,t) = e^{i\Lambda t} \phi(x)$ of the time-dependent NLS equation (\ref{NLS})
on the periodic graph $\Gamma$. The stationary NLS equation (\ref{statNLS}) is the Euler--Lagrange equation of the energy
functional $H_{\Lambda} := E - \Lambda Q$, where
\begin{equation}
\label{energy}
E(u) = \int_{\Gamma} |\partial_x u|^2 dx - \int_{\Gamma} |u|^4 dx
\end{equation}
and
\begin{equation}
\label{charge}
Q(u) = \int_{\Gamma} |u|^2 dx
\end{equation}
are two conserved quantities in the time evolution of the NLS equation (\ref{NLS}).
Quantities $E$ and $Q$ have the physical meaning of the Hamiltonian and mass, respectively.
In the definitions (\ref{energy}) and (\ref{charge}), the integrals are defined piecewise along each
edge of the periodic graph $\Gamma$. The critical points
of $H_{\Lambda}$ are defined in the energy space $\mathcal{E}$ given by
\begin{equation*}
\label{energy-space}
\mathcal{E} := \left\{ u \in H^1(\Gamma) : \quad u \in C^0(\overline{\Gamma}) \right\}.
\end{equation*}
Here and in what follows, $H^s(\Gamma)$, $s \in \mathbb{N}$ is defined  by using piecewise integration along each edge of the
graph $\Gamma$, whereas $u \in C^0(\overline{\Gamma})$ means that $u$ is continuous
not only along the edges but also across the vertex points of the graph $\Gamma$.

Compared to the weak energy space $\mathcal{E}$, strong solutions
of the stationary NLS equation (\ref{statNLS}) are defined
in the domain space $\mathcal{D}$, which is a subspace of $H^2(\Gamma)$
closed with the continuity conditions as well as with the Kirchhoff boundary conditions
for derivatives across the vertex points, see equations (\ref{keq1}) and (\ref{keq4}) below.
By Theorem 1.4.11 in \cite{Kuchment}, although the energy space $\mathcal{E}$ is only
defined by the continuity boundary conditions,
the Kirchhoff boundary conditions for the derivatives are natural boundary conditions
for critical points of the energy functional $H_{\Lambda}$ in the space $\mathcal{E}$.
By bootstrap arguments, any critical point of the energy functional $H_{\Lambda}$ in $\mathcal{E}$ is also
a solution of the stationary NLS equation (\ref{statNLS}) in $\mathcal{D}$.
On the other hand, solutions of the stationary NLS equation (\ref{statNLS}) in $\mathcal{D}$
are immediately the critical points of the energy functional $H_{\Lambda}$. Therefore,
the set of bound states of the stationary NLS equation (\ref{statNLS})
is equivalent to the set of critical points of the energy functional $H_{\Lambda}$.

In the context of the stationary NLS equation (\ref{statNLS}), we consider small bound states
$\phi \in \mathcal{D}$ bifurcating for small negative $\Lambda$.
In this asymptotic limit, we prove the existence of two families of small, positive, exponentially decaying bound states,
one of which is centered at the midpoint of the horizontal link connecting two rings in the periodic graph $\Gamma$
and the other one is centered symmetrically at the midpoints in the upper and lower semicircles of one ring,
see the periodic graph $\Gamma$ in Figure \ref{qgfig1}.
By discrete translational invariance, the two bound states can be translated to the midpoints of every horizontal link
and every ring in $\Gamma$.

On a technical side, we show that the two families of the bound states
can be obtained from the symmetric solutions of the two-dimensional discrete map.
The two families of bound states of the stationary NLS equation (\ref{statNLS})
on the periodic graph $\Gamma$ bifurcate from the point $\Lambda = 0$, which coincides with
the bottom of the spectrum of the linear operator $-\partial_x^2$ in $L^2(\Gamma)$.
The following theorem presents the main result of this work.

\begin{theorem}
\label{theorem-main}
There are positive constants $\Lambda_0$ and $C_0$ such that for every $\Lambda \in (-\Lambda_0,0)$,
the stationary NLS equation (\ref{statNLS}) admits two bound states $\phi \in \mathcal{D}$
(up to the discrete translational invariance) such that
\begin{equation}
\label{branch-bounds}
\| \phi \|_{H^2(\Gamma)} \leq C_0 |\Lambda|^{1/2}.
\end{equation}
One bound state satisfies
\begin{equation}
\label{branch-1}
\phi(x-L/2) = \phi(L/2-x), \quad x \in \Gamma
\end{equation}
and the other one satisfies
\begin{equation}
\label{branch-2}
\phi(x-L-\pi/2) = \phi(L+\pi/2-x), \quad x \in \Gamma,
\end{equation}
where $L$ is the length of the horizontal link and $\pi$ is the arc length of the upper and lower semicircles in $\Gamma$.
Moreover, it is true for both bound states that
\begin{itemize}
\item[(i)] $\phi$ is symmetric in upper and lower semicircles of $\Gamma$,
\item[(ii)] $\phi(x) > 0$ for every $x \in \Gamma$,
\item[(iii)] $\phi(x) \to 0$ as $|x| \to \infty$ exponentially fast.
\end{itemize}
\end{theorem}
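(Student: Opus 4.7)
The plan is to reduce the boundary value problem for the stationary NLS on the graph $\Gamma$ to a search for homoclinic orbits of a two-dimensional analytic discrete map, and then to exploit reversibility of that map together with a rescaling near $\Lambda = 0$ to produce two distinct symmetric homoclinic branches.

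First, I would parametrize a fundamental cell consisting of one horizontal link of length $L$ and one ring of total length $2\pi$. Imposing the symmetry (i) between the upper and lower semicircles collapses the state at each "horizontal-link vertex" to a pair $(a_n,b_n) \in \mathbb{R}^2$ recording the value of $\phi$ and a derivative quantity there. On each edge, the equation $-\phi'' - 2\phi^3 = \Lambda\phi$ has the conserved Hamiltonian $(\phi')^2 - \Lambda\phi^2 - \phi^4$, so the edge flow defines an analytic map; composing the semicircle flow (upper and lower identified by symmetry (i)) with the horizontal-link flow, and imposing continuity and the Kirchhoff condition at the vertices, yields a single analytic map $\Phi_\Lambda:(a_n,b_n)\mapsto(a_{n+1},b_{n+1})$ for which bound states in $\mathcal{D}$ correspond exactly to orbits homoclinic to the origin. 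For $\Lambda<0$ the origin is a hyperbolic fixed point of $\Phi_\Lambda$, since the linearization inherits hyperbolicity from the positivity of $-\Lambda$ together with the standard transfer-matrix computation on each edge.

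Second, I would use reversibility. The reflections of $\Gamma$ about the midpoint of the horizontal link and about the midpoint of a ring induce two involutions $R_1,R_2$ of the phase plane satisfying $\Phi_\Lambda\circ R_j = R_j\circ\Phi_\Lambda^{-1}$, with one-dimensional fixed sets $\mathrm{Fix}(R_j)$ through the origin. By the classical symmetric-orbit principle, any intersection of the one-dimensional stable manifold $W^s_\Lambda$ of $0$ with $\mathrm{Fix}(R_j)$ produces an orbit that is homoclinic to $0$ and symmetric under $R_j$; the two symmetry types (\ref{branch-1}) and (\ref{branch-2}) of the theorem correspond precisely to $j=1$ and $j=2$.

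The main obstacle is producing these intersections uniformly for all small $|\Lambda|$, because $\Phi_\Lambda$ degenerates to the identity at $\Lambda=0$ and the analysis is singular. I would handle this by introducing the rescaling $(a_n,b_n) = |\Lambda|^{1/2}(A_n,|\Lambda|^{1/2}B_n)$ suggested by the homogenization (\ref{nls0}), under which $\Phi_\Lambda$ becomes a near-identity map whose formal limit is the time-$|\Lambda|^{1/2}$ flow of the stationary homogenized NLS $-\beta\partial_\xi^2\psi - \gamma\psi^3 = -\psi$ on $\mathbb{R}$. This limit equation has the explicit positive $\mathrm{sech}$-type homoclinic, which crosses each symmetry set $\mathrm{Fix}(R_j)$ transversely. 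Persistence of hyperbolic invariant manifolds under analytic perturbation then carries the transverse intersection back to $\Phi_\Lambda$ for all small $|\Lambda|$, yielding the two bound states claimed in the theorem together with the scaling (\ref{branch-bounds}) from the prefactor $|\Lambda|^{1/2}$ in the rescaling.

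Finally, properties (ii) and (iii) follow from this construction. Exponential decay (iii) is inherited from the geometric convergence of the homoclinic orbit of $\Phi_\Lambda$ to the hyperbolic fixed point, with rate governed by the contracting eigenvalue of $d\Phi_\Lambda(0)$, which on each edge integrates to exponential decay of $\phi(x)$ in the graph distance. Positivity (ii) follows from closeness to the strictly positive limit profile: the rescaled solution converges in $C^0$ to the sech profile as $\Lambda\to 0^-$, so for $|\Lambda|$ small enough the bound state cannot vanish, and then continuity combined with $\phi(x)\to 0$ at infinity rules out sign changes.
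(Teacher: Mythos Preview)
Your approach is essentially the same as the paper's: reduction to a two-dimensional discrete map via the edge flows under the symmetric reduction $\phi_{n,+}=\phi_{n,-}$, identification of two reversibilities corresponding to the two midpoints, the rescaling $(a_n,b_n)=|\Lambda|^{1/2}(A_n,|\Lambda|^{1/2}B_n)$ leading to the homogenized stationary NLS with its sech homoclinic, and a stable/unstable-manifold argument to produce the two symmetric homoclinic orbits. The paper in fact runs two parallel existence arguments---one via an implicit-function-theorem persistence of the sech profile in the scalar second-order difference equation (invertibility of the linearized operator on the reversible subspace being exactly your transversality of $W^s$ with $\mathrm{Fix}(R_j)$), and a second via direct geometric tracking of the unstable curve until it crosses the symmetry curve---but the content matches what you outline.

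There is, however, a gap in your positivity argument. You claim that $C^0$ closeness of the rescaled solution to the sech profile implies the bound state ``cannot vanish,'' and then that continuity together with decay at infinity rules out sign changes. The second step is fine once the first is established, but the first fails in the tails: the sech profile itself tends to zero, so a uniform $C^0$ error of any fixed size does not prevent the perturbed orbit from crossing zero far out, where $\mathrm{sech}$ is already smaller than the error. The paper avoids this by building positivity into the unstable-manifold construction: one checks that the unstable eigendirection at the origin points into the first quadrant, and then tracks the orbit explicitly, showing $\alpha_n>0$ all along the homoclinic until it meets the symmetry curve. You can repair your argument the same way---the hyperbolic tail you already invoke for (iii) keeps the orbit on the positive branch of $W^u$ near the origin, and the $C^0$ closeness handles the bounded middle region---or alternatively establish closeness in an exponentially weighted norm matched to the stable eigenvalue; but as written the positivity step is incomplete.
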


\begin{remark}
We conjecture that the bound state satisfying (\ref{branch-1}) is the ground state of
the NLS equation (\ref{NLS}) in the sense that it is a standing wave of smallest energy $E$ at
a fixed value of $Q$ in the limit of small positive $Q$. Indeed, both bound states correspond to the values
of $\Lambda$ near the bottom of the spectrum of the linear operator $-\partial_x^2$ in $L^2(\Gamma)$.
Both bound states have properties (ii)--(iii), which are standard properties of the ground states in the NLS equation.
However, in comparison, the bound state satisfying (\ref{branch-1}) has a single hump at $x = L/2$, whereas
the bound state satisfying (\ref{branch-2}) has two humps at $x = L + \pi/2$ in the upper and lower semicircles
due to property (i). The bound state with two humps is expected to have bigger energy $E$ at a fixed value of $Q$.
Unfortunately, the energy difference between the two bound states is exponentially small
in terms of small parameter $|\Lambda|$, see Section 5 below.
\label{remark-1}
\end{remark}

\begin{remark}
Using the asymptotic method developed in \cite{MP}, one can prove existence of
two bound states of the stationary NLS equation (\ref{statNLS}) in the limit of
large negative $\Lambda$. One bound state represents a narrow solitary wave symmetric about $L/2$.
The other bound state represents two narrow solitary waves
symmetric about $L + \pi/2$ in the upper and lower semicircles. It becomes then obvious from
the methods developed in \cite{MP} that the former solution is a ground state of the NLS equation (\ref{NLS}).
Connection between the limits of small and large negative $\Lambda$ was considered numerically
in \cite{MP}. Similar results are expected for the periodic graph $\Gamma$, as
is suggested by numerical approximations in Section 5.
\label{remark-2}
\end{remark}

The rest of the paper is organized as follows. The linear spectral analysis on the periodic graph $\Gamma$
involving a two-dimensional linear discrete map is developed in Section 2. Existence of the two
bound states stated in Theorem \ref{theorem-main} is obtained in Section 3 by using
a two-dimensional nonlinear discrete map. Properties (i)--(iii) stated in Theorem \ref{theorem-main}
are proved in Section 4 by using geometric theory of stable and unstable manifolds in two-dimensional
discrete maps. Section 5 reports numerical approximations of the two bound states obtained in Theorem
\ref{theorem-main}.

\section{Linear discrete map for the spectral problem on $\Gamma$}
\label{sec-main-result}

We consider the periodic graph $\Gamma$ shown on Figure \ref{qgfig1} with the circles
of the normalized arc length $2\pi$ and the horizontal links of the length $L$. Writing the periodic graph as
$$
\Gamma = \oplus_{n \in \Z} \Gamma_n, \qquad  \textrm{with}
\qquad \Gamma_n =  \Gamma_{n,0} \oplus  \Gamma_{n,+} \oplus  \Gamma_{n,-},
$$
we can map the horizontal links to $\Gamma_{n,0} := [nP, nP + L]$
and the upper and lower semicircles to $\Gamma_{n,\pm} := [nP + L,(n+1)P]$ for $n \in \mathbb{Z}$,
where $P = L + \pi$ is the graph period. For a function $\phi : \Gamma \to \mathbb{C} $, we denote
its part on $\Gamma_{n,0}$ with $\phi_{n,0}$ and its part on $\Gamma_{n,\pm} $ with $\phi_{n,\pm}$.

The Laplacian operator $\partial_x^2$ is defined on its domain $\mathcal{D} \subset L^2(\Gamma)$
under two boundary conditions at the vertex points $\{ nP \}_{n \in \mathbb{Z}}$ and $\{ nP + L \}_{n \in \mathbb{Z}}$.
We use continuity of the functions at the vertices
\begin{eqnarray}
\left\{ \begin{array}{l} \phi_{n,0}(nP + L) = \phi_{n,+}(nP + L) = \phi_{n,-}(nP + L),  \\
\phi_{n+1,0}((n+1)P) = \phi_{n,+}((n+1) P) = \phi_{n,-}((n+1) P), \end{array} \right. \label{keq1}
\end{eqnarray}
and the Kirchhoff conditions for the derivatives of the functions at the vertices
\begin{eqnarray}
\left\{ \begin{array}{l} \partial_x \phi_{n,0}(nP + L) = \partial_x \phi_{n,+}(nP + L) + \partial_x \phi_{n,-}(nP + L), \\
\partial_x \phi_{n+1,0}((n+1)P) = \partial_x \phi_{n,+}((n+1)P) + \partial_x \phi_{n,-}((n+1)P).
\end{array} \right.
\label{keq4}
\end{eqnarray}

\begin{remark}
In the literature \cite{KL,KZ}, the periodic graph $\Gamma$ shown on Figure \ref{qgfig1} was directed
differently compared to the direction used in our work. Figure \ref{fig2} shows
two different orientations of the lower semicircle in the basic cell $\; \Gamma_0$.
The top panel corresponds to our orientation, whereas the bottom panel shows
the orientation used in \cite{KL,KZ}. The change in the direction along the lower semicircle
results in the change in the signs of the Kirchhoff boundary conditions (\ref{keq4}) but
does not change the spectral and bifurcation results.
\end{remark}

\begin{figure}[htbp]
\begin{center}
\includegraphics[scale=0.35]{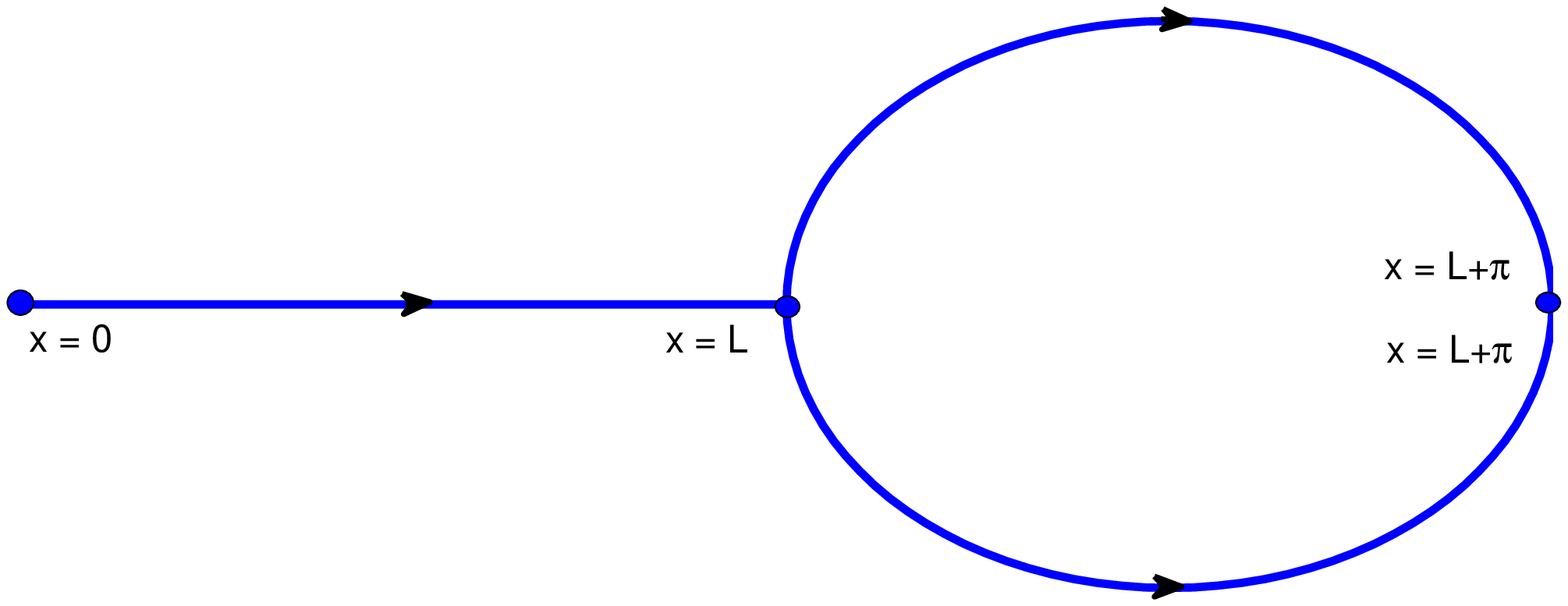}
\includegraphics[scale=0.35]{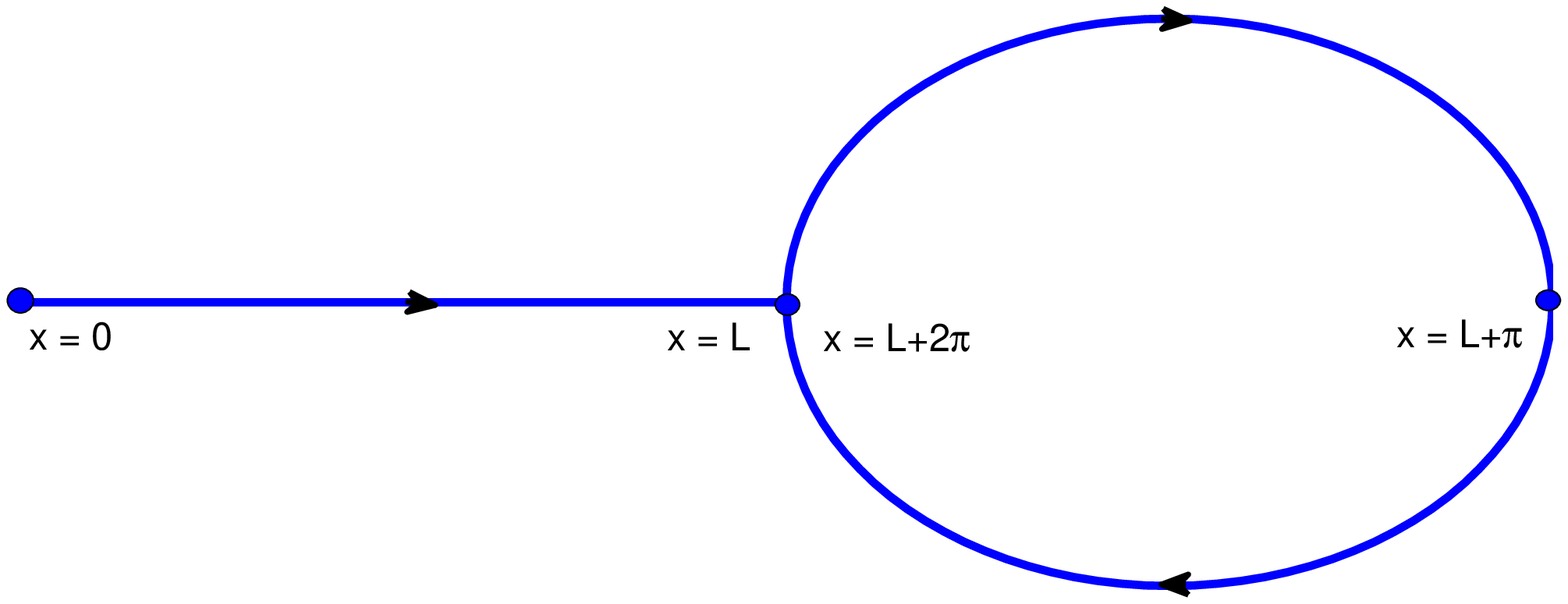}
\end{center}
\caption{The basic cell $\Gamma_0$ of the periodic graph $\Gamma$ for two different but equivalent orientations.}
\label{fig2}
\end{figure}

There exist two invariant reductions satisfying the stationary NLS equation (\ref{statNLS}).
The first reduction corresponds to the solutions compactly supported in the circles with zero
components in the horizontal links:
\begin{equation}
\label{reduction-1}
\left\{ \begin{array}{ll}
\phi_{n,0}(x) = 0, & x \in [nP, nP+L], \\
\phi_{n,+}(x) = -\phi_{n,-}(x), & x \in [nP+L,(n+1)P], \end{array} \right.
\quad \quad n \in \mathbb{Z}.
\end{equation}
The boundary conditions (\ref{keq1}) and (\ref{keq4}) are satisfied
for the reduction (\ref{reduction-1}) if and only if $\phi_{n,+}$ satisfies the homogeneous Dirichlet
boundary conditions at the end points $nP+L$ and $(n+1)P$.

The second reduction corresponds to the solution symmetrically placed in the semicircles:
\begin{equation}
\label{reduction-2}
\phi_{n,+}(x) = \phi_{n,-}(x),  \quad x \in [nP+L,(n+1)P], \quad \quad n \in \mathbb{Z}.
\end{equation}
The boundary conditions (\ref{keq1}) and (\ref{keq4}) can now be
closed in terms of only two components:
\begin{eqnarray}
\left\{ \begin{array}{l} \phi_{n,0}(nP + L) = \phi_{n,+}(nP + L),  \\
\phi_{n+1,0}((n+1)P) = \phi_{n,+}((n+1) P), \end{array} \right. \label{keq1-red}
\end{eqnarray}
and
\begin{eqnarray}
\left\{ \begin{array}{l} \partial_x \phi_{n,0}(nP + L) = 2 \partial_x \phi_{n,+}(nP + L), \\
\partial_x \phi_{n+1,0}((n+1)P) = 2 \partial_x \phi_{n,+}((n+1)P).
\end{array} \right.
\label{keq4-red}
\end{eqnarray}

The spectral problem associated with the Laplacian operator $\partial_x^2$ is given by
\begin{equation} \label{Laplacian}
- \partial_x^2 w = \lambda w, \quad x \in \Gamma.
\end{equation}
By Theorem 1.4.4 in  \cite{Kuchment}, the Laplacian operator $\partial_x^2 : \mathcal{D} \to L^2(\Gamma)$
is self-adjoint. Therefore, the values of $\lambda$ are real. Moreover, integrating by parts and
using the boundary conditions (\ref{keq1})--(\ref{keq4}), we confirm that
for every $w \in \mathcal{D} \subset L^2(\Gamma)$, we have
$$
\lambda \| w \|_{L^2(\Gamma)}^2 = \| \partial_x w \|_{L^2(\Gamma)}^2 \geq 0,
$$
hence, the values of $\lambda$ are positive. Now, we note an elementary result.

\begin{proposition}
\label{prop-spectrum}
The spectrum $\sigma(-\partial_x^2)$ in $L^2(\Gamma)$ consists of two parts, which correspond
to eigenfunctions $w \in \mathcal{D} \subset L^2(\Gamma)$,
which either satisfy the reduction (\ref{reduction-1}) or the reduction (\ref{reduction-2}).
\end{proposition}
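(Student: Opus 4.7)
The plan is to exploit the $\mathbb{Z}_2$ symmetry of the graph that exchanges the upper and lower semicircles. Define the involution $\tau : L^2(\Gamma) \to L^2(\Gamma)$ acting as the identity on each horizontal link $\Gamma_{n,0}$ and by swapping the two semicircles, $(\tau w)_{n,\pm}(x) = w_{n,\mp}(x)$ for $x \in \Gamma_{n,\pm}$. First I would verify that $\tau$ preserves the continuity conditions (\ref{keq1}) and the Kirchhoff conditions (\ref{keq4}), hence maps $\mathcal{D}$ to itself, and that $\tau$ commutes with $-\partial_x^2$ (which acts edgewise). Since $\tau^2 = \mathrm{id}$, we obtain an orthogonal decomposition
\begin{equation*}
L^2(\Gamma) = L^2_+(\Gamma) \oplus L^2_-(\Gamma), \qquad \mathcal{D} = \mathcal{D}_+ \oplus \mathcal{D}_-,
\end{equation*}
where $L^2_\pm(\Gamma)$ are the $\pm 1$ eigenspaces of $\tau$, and $-\partial_x^2$ leaves each summand invariant. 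Consequently, $\sigma(-\partial_x^2) = \sigma(-\partial_x^2|_{\mathcal{D}_+}) \cup \sigma(-\partial_x^2|_{\mathcal{D}_-})$ and every eigenfunction can be written as a sum of one in $\mathcal{D}_+$ and one in $\mathcal{D}_-$.

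Next I would identify the two invariant subspaces with the reductions (\ref{reduction-2}) and (\ref{reduction-1}). Elements of $\mathcal{D}_+$ satisfy $\phi_{n,+}=\phi_{n,-}$ by definition, which is exactly (\ref{reduction-2}). Elements of $\mathcal{D}_-$ satisfy $\phi_{n,+}=-\phi_{n,-}$; evaluating the continuity relations (\ref{keq1}) at the vertex $nP+L$ gives $\phi_{n,+}(nP+L) = -\phi_{n,+}(nP+L)$, so $\phi_{n,+}(nP+L)=0$ and simultaneously $\phi_{n,0}(nP+L)=0$, with the analogous statement at $(n+1)P$. The Kirchhoff condition (\ref{keq4}) then forces $\partial_x\phi_{n,0}(nP+L) = \partial_x\phi_{n,+}(nP+L)+\partial_x\phi_{n,-}(nP+L) = 0$, and similarly at $(n+1)P$.

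Finally, I would close the argument by noting that $\phi_{n,0}$ now solves the ODE $-\partial_x^2 \phi_{n,0} = \lambda \phi_{n,0}$ on $[nP,nP+L]$ with both Dirichlet and Neumann data vanishing at the endpoints; by uniqueness for linear second-order ODEs this gives $\phi_{n,0}\equiv 0$, establishing the first line of (\ref{reduction-1}). Combined with $\phi_{n,+}=-\phi_{n,-}$ this is precisely the reduction (\ref{reduction-1}), and the two families are mutually exclusive since one has zero horizontal components and the other has symmetric semicircular components. The main technical point is the verification that $\tau$ respects the Kirchhoff vertex conditions so that the decomposition is preserved by $-\partial_x^2$ as a self-adjoint operator on $\mathcal{D}$; everything else reduces to elementary linear algebra and ODE uniqueness.
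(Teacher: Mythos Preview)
Your proof is correct and follows essentially the same strategy as the paper's: decompose into parts symmetric and antisymmetric under the exchange of the two semicircles, and then use ODE uniqueness to kill the horizontal component in the antisymmetric sector. The paper carries this out by hand on the level of the components $w_{n,\pm}$, while you package the same idea more cleanly as a $\mathbb{Z}_2$ involution $\tau$ commuting with the self-adjoint operator, which immediately yields the orthogonal invariant decomposition $\mathcal{D}=\mathcal{D}_+\oplus\mathcal{D}_-$ and the union of spectra.

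One small remark: your final step invoking ODE uniqueness is actually superfluous in your own framing. Since you defined $\tau$ to act as the identity on each horizontal link $\Gamma_{n,0}$, any $w\in\mathcal{D}_-$ satisfies $w_{n,0}=(\tau w)_{n,0}=-w_{n,0}$ directly, so $w_{n,0}\equiv 0$ without any appeal to the differential equation. The boundary-value analysis and ODE uniqueness are exactly what the paper needs because it builds the decomposition starting from the semicircle components and then has to \emph{deduce} the splitting of $w_{n,0}$; in your setup that splitting is already forced by the definition of $\tau$.
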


\begin{proof}
As the linear superposition principle can be applied to the linear homogeneous equation (\ref{Laplacian}),
a general solution of the second-order differential equation (\ref{Laplacian})
on the periodic graph $\Gamma$ can be superposed as the sum of two components,
one satisfies the reduction (\ref{reduction-1}) and the other one satisfies the reduction (\ref{reduction-2}).

Indeed, for general $w_{n,+}$ and $w_{n,-}$, we can present these functions as a sum
of symmetric and anti-symmetric components (the latter ones vanish at the end points
of the intervals $[nP+L,(n+1)P]$). Due to the boundary conditions (\ref{keq1}) and (\ref{keq4}),
the two components generate the corresponding decomposition of $w_{n,0}$. The symmetric
part of $w_{n,0}$ satisfies (\ref{keq1-red}) and (\ref{keq4-red}), whereas
the anti-symmetric part of $w_{n,0}$ is identically zero, due to uniqueness of the zero
solution of the second-order differential equation (\ref{Laplacian}) with zero values
both for functions and their derivatives.
\end{proof}

By Proposition \ref{prop-spectrum}, we can search for the eigenfunctions $w$ of the
spectral problem (\ref{Laplacian}) separately within the reductions (\ref{reduction-1}) and (\ref{reduction-2}).
Eigenfunctions satisfying the reduction (\ref{reduction-1}) are given by
\begin{equation}
\label{compact}
\left\{ \begin{array}{ll} w_{n,0}(x) = 0, & x \in [nP, nP+L], \\
w_{n,\pm}(x) = \pm \delta_{n,k} \sin(m (x - 2 \pi n)), & x \in [nP+L,(n+1)P], \end{array} \right. \quad
n \in \mathbb{Z},
\end{equation}
for fixed $m \in \mathbb{N}$ and $k \in \mathbb{Z}$. There exist countably many eigenfunctions
(\ref{compact}) for the same eigenvalue $\lambda = m^2$.
Hence, the first part of $\sigma(-\partial_x^2)$ is given by the sequence of eigenvalues
$\{ m^2 \}_{m \in \mathbb{N}}$ of infinite multiplicity.

Eigenfunctions of the differential equation (\ref{Laplacian})
satisfying the reduction (\ref{reduction-2}) can be represented
in the piecewise form
\begin{equation*}
\left\{ \begin{array}{ll}
w_{n,0}(x) = a_n \cos(\omega (x - nP)) + b_n \sin(\omega (x-nP)), & x \in [nP, nP+L], \\
w_{n,\pm}(x) = c_n \cos(\omega (x-nP-L)) + d_n \sin(\omega (x-nP-L)), & x \in [nP+L,(n+1)P],
\end{array} \right.
\end{equation*}
where the spectral parameter $\lambda$ is parameterized as $\lambda = \omega^2$,
and the coefficients $\{ a_n,b_n,c_n,d_n \}_{n \in \mathbb{Z}}$ are to be defined from
the homogeneous Kirchhoff conditions (\ref{keq1-red}) and (\ref{keq4-red}).
Boundary conditions at the vertices $\{ nP+L \}_{n \in \mathbb{Z}}$ yield
\begin{equation}
\left\{ \begin{array}{l} c_n = a_n \cos(\omega L) + b_n \sin(\omega L), \\
2 d_n = - a_n \sin(\omega L) + b_n \cos(\omega L), \end{array} \right.
\label{system-1}
\end{equation}
whereas the boundary conditions at the vertices $\{ nP \}_{n \in \mathbb{Z}}$
yield
\begin{equation}
\left\{ \begin{array}{l} a_{n+1} = c_n \cos(\omega \pi) + d_n \sin(\omega \pi), \\
b_{n+1} = - 2 c_n \sin(\omega \pi) + 2 d_n \cos(\omega \pi). \end{array} \right.
\label{system-2}
\end{equation}
Eliminating $\{ c_n, d_n \}_{n \in \mathbb{Z}}$ from system (\ref{system-1}) and (\ref{system-2}),
we obtain the two-dimensional linear discrete map in the matrix form
\begin{equation}
\label{system-3}
\left[ \begin{array}{l} a_{n+1} \\ b_{n+1} \end{array} \right] = M(\omega)
\left[ \begin{array}{l} a_{n} \\ b_{n} \end{array} \right],
\end{equation}
where the monodromy matrix is given explicitly by
\begin{equation}
M(\omega) := \left[ \begin{array}{cc} \cos(\omega \pi) & \sin(\omega \pi) \\ -2 \sin(\omega \pi) & 2 \cos(\omega \pi) \end{array} \right]
\left[ \begin{array}{cc} \cos(\omega L) & \sin(\omega L) \\ -\frac{1}{2} \sin(\omega L) & \frac{1}{2} \cos(\omega L) \end{array} \right].
\end{equation}
By direct computation, we check that $\det(M) = 1$ and ${\rm tr}(M) \equiv T$ is given by
\begin{equation}
\label{transcendental}
T(\omega) = 2 \cos(\omega \pi) \cos(\omega L) - \frac{5}{2} \sin(\omega \pi) \sin(\omega L).
\end{equation}

\begin{figure}[htbp]
   \centering
   \includegraphics[width=4in]{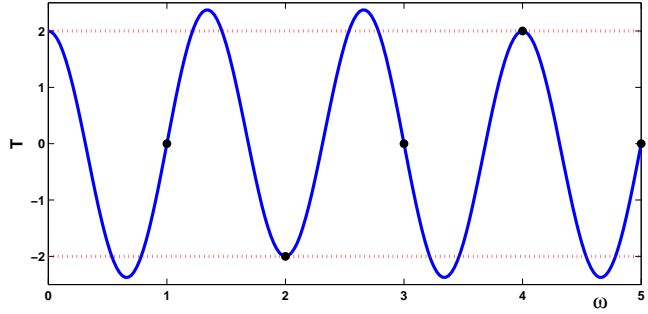}
\caption{The graph of $T$ versus $\omega$ for the periodic graph $\Gamma$ with $L = \pi/2$.}
\label{fig:example}
\end{figure}

Let $\mu_1$ and $\mu_2$ be the two eigenvalues of $M$ called the Floquet multipliers. Then,
$\mu_1 \mu_2 = 1$ and $\mu_1 + \mu_2 = T$, so that we can find the spectral bands
of the spectral problem (\ref{Laplacian}) in terms of the parameter $\lambda = \omega^2$
from the condition $|\mu_1| = |\mu_2| = 1$. The standard way of finding these bands is
to plot the graph of $T$ versus $\omega$ and to find intervals in $\omega$,
where $|T(\omega)| \leq 2$. Figure \ref{fig:example} shows this graph for $L = \pi/2$.
The eigenvalues of infinite multiplicities are shown in black dots. As we can see,
some spectral bands are disjoint from each other with nonempty gaps, whereas some others
touch each other and admit no gap.

Summarizing the previous computations, we identify the spectrum $\sigma(-\partial_x^2)$ on $L^2(\Gamma)$.

\begin{proposition}
\label{prop-location}
The spectrum $\sigma(-\partial_x^2)$ in $L^2(\Gamma)$ consists of eigenvalues $\{ m^2 \}_{m \in \mathbb{N}}$
of infinite multiplicity and a countable set of spectral bands $\{ \sigma_k \}_{k \in \mathbb{N}}$,
which are determined from the condition $T(\omega) \in [-2,2]$, where $T(\omega)$ is given by (\ref{transcendental})
and $\lambda = \omega^2$. Moreover, the eigenvalues of infinite multiplicity belong to the spectral bands:
$$
m^2 \in \cup_{k \in \mathbb{N}} \sigma_k \quad \mbox{\rm for every} \;\; m \in \mathbb{N}.
$$
\end{proposition}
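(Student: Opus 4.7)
The plan is to assemble the proposition from three pieces, two of which are already essentially in hand from the preceding discussion, and then to verify the embedding of the infinite-multiplicity eigenvalues into the band spectrum by a direct calculation with the trace formula.

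First, I would invoke Proposition \ref{prop-spectrum} to split $\sigma(-\partial_x^2)$ into contributions from the two invariant reductions (\ref{reduction-1}) and (\ref{reduction-2}), so that it suffices to identify the spectrum of $-\partial_x^2$ restricted to each invariant subspace. For the antisymmetric reduction (\ref{reduction-1}), the eigenvalue equation on each semicircle reduces to $-w'' = \lambda w$ with homogeneous Dirichlet boundary conditions on $[nP+L,(n+1)P]$. This is the standard Dirichlet Laplacian on an interval of length $\pi$, whose eigenvalues are $\{m^2\}_{m\in\mathbb{N}}$ with eigenfunctions $\sin(m(x-2\pi n))$ as in (\ref{compact}). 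Since these can be supported on any single cell independently, each $m^2$ has infinite multiplicity, and this gives the first part of the spectrum.

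Next, for the symmetric reduction (\ref{reduction-2}), a bounded generalized eigenfunction of (\ref{Laplacian}) at $\lambda = \omega^2$ corresponds, via the coefficients $(a_n,b_n)$, to a bounded orbit of the discrete map (\ref{system-3}) with monodromy matrix $M(\omega)$. Since $\det M(\omega) = 1$ by direct computation, the eigenvalues of $M(\omega)$ form a reciprocal pair $\mu,\mu^{-1}$; a nontrivial bounded orbit exists exactly when both Floquet multipliers lie on the unit circle, equivalently when $T(\omega) = \mu + \mu^{-1}$ satisfies $|T(\omega)|\leq 2$. By standard Floquet--Bloch theory (which applies to the reduced problem on the equivalent one-dimensional graph with two edges per period and modified Kirchhoff weights), this set corresponds to the continuous spectrum of $-\partial_x^2$ on the symmetric subspace, and decomposes into a countable union of closed spectral bands $\{\sigma_k\}_{k\in\mathbb{N}}$ accumulating at infinity.

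Finally, for the embedding statement $m^2\in\bigcup_{k\in\mathbb{N}}\sigma_k$, I would substitute $\omega = m\in\mathbb{N}$ into (\ref{transcendental}). Because $\sin(m\pi)=0$ and $\cos(m\pi)=(-1)^m$, the formula collapses to
\begin{equation*}
T(m) = 2(-1)^m \cos(mL),
\end{equation*}
so that $|T(m)| = 2|\cos(mL)|\leq 2$ automatically. Hence every $m^2$ lies in some band $\sigma_k$, giving exactly the embedded eigenvalues of infinite multiplicity described in the literature. I do not expect a genuine obstacle here: the work has already been done in defining $M(\omega)$ and $T(\omega)$, and the only subtlety is the standard argument identifying $|T(\omega)|\leq 2$ with membership in the spectrum, which can be made rigorous either by Weyl sequences constructed from Bloch waves or by citing the general Floquet theory for ODEs on metric graphs as in \cite{Kuchment}.
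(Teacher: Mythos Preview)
Your proposal is correct and follows essentially the same route as the paper: you invoke Proposition~\ref{prop-spectrum} to split into the two reductions, identify the eigenvalues $\{m^2\}$ from the Dirichlet problem on the semicircles, characterize the bands via $|T(\omega)|\le 2$ using the monodromy matrix, and verify the embedding by the direct substitution $T(m)=2(-1)^m\cos(mL)$. The paper's proof is organized identically, with the only cosmetic difference being that it phrases the band condition through the Fourier parameter $\theta$ with $T(\omega)=2\cos\theta$ rather than through the reciprocal multiplier pair $\mu,\mu^{-1}$.
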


\begin{proof}
The first assertion follows from Proposition \ref{prop-spectrum} and the explicit construction of eigenfunctions
described above.
In particular, the $k$-th spectral band $\sigma_k$ can be parameterized by a continuous parameter $\theta \in [-\pi,\pi]$,
which arises in the band-limited Fourier transform for bounded solutions of the linear difference map (\ref{system-3}):
\begin{equation}
\label{Fourier}
\left[ \begin{array}{l} a_{n} \\ b_{n} \end{array} \right] = \int_{-\pi}^{\pi}
\left[ \begin{array}{l} \hat{a}(\theta) \\ \hat{b}(\theta) \end{array} \right] e^{i n \theta} d \theta, \quad n \in \mathbb{Z},
\end{equation}
such that $T(\omega) = 2 \cos(\theta) \in [-2,2]$.

It remains to prove that eigenvalues $\{ m^2 \}_{m \in \mathbb{N}}$ belong to the
union of spectral bands $\cup_{k \in \mathbb{N}} \sigma_k$. Indeed, substituting
$\omega = m$ for $m \in \mathbb{N}$ to (\ref{transcendental}), we obtain
$$
T(m) = 2 (-1)^m \cos(m L) \in [-2,2].
$$
If $|\cos(mL)| = 1$, then $m^2$ is located at the spectral edge (one of the two end points of the
corresponding spectral band), see $m = 2, 4$ in Figure \ref{fig:example}.
If $|\cos(mL)| < 1$, then $m^2$ belongs to the interior of the corresponding spectral band,
see $m = 1,3,5$ in Figure \ref{fig:example}.
\end{proof}

\begin{remark}
\label{remark-lowest-inf}
It follows from the graph of $T(\omega)$ defined by  (\ref{transcendental}) that the smallest eigenvalue of infinite multiplicity $\lambda = 1$
belongs to the second spectral band if $L \in (0,\pi)$, the third spectral band if $L \in (\pi,2\pi)$, and so on.
\end{remark}

Let us simplify $T(\omega)$ defined by  (\ref{transcendental})
at the lowest end point $\lambda = 0$ of the lowest spectral band $\sigma_1$ of the spectral problem (\ref{Laplacian}). Expanding
$T$ in powers of $\omega$, we obtain
$$
T(\omega) = 2 - \nu^2 \omega^2 + \mathcal{O}(\omega^4) \quad \mbox{\rm as} \quad \omega \to 0,
$$
where $\nu^2 := \pi^2 + L^2 + \frac{5}{2} \pi L = (L+\pi/2) (L + 2 \pi)$ is a numerical constant.

Using the band-limited Fourier transform (\ref{Fourier}), one can parameterize
the Floquet multipliers on the unit circle by $\mu_1 = e^{i \theta}$ and $\mu_2 = e^{-i \theta}$.
Therefore, for small $\theta$, we obtain another expansion
$$
\mu_1 + \mu_2 = 2 \cos(\theta) = 2 - \theta^2 + \mathcal{O}(\theta^4) \quad \mbox{\rm as} \quad \theta \to 0.
$$

Bringing $T(\omega) = \mu_1 + \mu_2 = 2 \cos(\theta)$ together, we obtain the asymptotic
approximation for the lowest spectral band of the spectral problem (\ref{Laplacian})
near the lowest end point $\lambda = 0$ by
\begin{equation}
\label{lowest-band}
\lambda(\theta) = \nu^{-2} \theta^2 + \mathcal{O}(\theta^4) \quad \mbox{\rm as} \quad \theta \to 0,
\end{equation}
where $\lambda(\theta)$ is a parametrization of $\sigma_1$.

\begin{remark}
The spectral bands of the spectral problem (\ref{Laplacian})
can also be parameterized by the Bloch quasi-momentum in the Bloch wave representation of the eigenfunctions $w$.
The Bloch wave representation is known for the periodic metric graphs
\cite{Kuchment} and has been explored in our recent work \cite{GilgPS}. In the present paper,
we avoid the Bloch wave representation and work entirely with the two-dimensional discrete maps,
which generalize the linear discrete map (\ref{system-3}).
\end{remark}

\section{Nonlinear discrete map for the bound state on $\Gamma$}

The bound states defined by the stationary NLS equation (\ref{statNLS}) in $\mathcal{D} \subset H^2(\Gamma)$
may bifurcate from the zero states, when the parameter $\Lambda$ tends to the extremal points
in $\sigma(-\partial_x^2)$. Among all possible bound states, we are interested
in the small bound states bifurcating from the bottom in $\sigma(-\partial_x^2)$.

It follows from (\ref{lowest-band}) that the lowest spectral band extends from $\lambda = 0$ to positive
values of $\lambda$.
Therefore, we shall now consider bound states of the stationary NLS equation (\ref{statNLS})
for small negative $\Lambda$. We hence set $\Lambda := - \epsilon^2$ and consider
solutions of the stationary NLS equation
\begin{equation}
\label{statNLS-eps}
\partial_x^2 \phi - \epsilon^2 \phi + 2 |\phi|^{2} \phi =  0, \qquad \phi \in \mathcal{D} \subset L^2(\Gamma).
\end{equation}

\begin{remark}
As follows from the construction of Propositions \ref{prop-spectrum} and \ref{prop-location},
the lowest spectral band corresponds to
eigenfunctions of $\partial_x^2$ in $\mathcal{D}$ satisfying
the reduction (\ref{reduction-2}). By Remark \ref{remark-lowest-inf},
the lowest spectral band is disjoint from the lowest eigenvalue
$\lambda = 1$ of infinite multiplicity that corresponds to the
eigenfunctions of $\partial_x^2$ in $\mathcal{D}$ satisfying the reduction (\ref{reduction-1}).
Therefore, it is sufficient to consider
solutions of the stationary equation (\ref{statNLS-eps}) satisfying the reduction (\ref{reduction-2}).
\end{remark}

Our first task is to reduce the second-order differential equation (\ref{statNLS-eps})
on the periodic graph $\Gamma$ to the two-dimensional discrete map. To do so, let us define a solution
$\psi(x;a,b,\epsilon)$ of the initial-value problem on the infinite line:
\begin{equation}
\label{initial-value}
\left\{ \begin{array}{l} \partial_x^2 \psi - \epsilon^2 \psi + 2 |\psi|^{2} \psi =  0, \qquad x \in \mathbb{R}, \\
\psi(0) = a, \\ \partial_x \psi(0) = b, \end{array} \right.
\end{equation}
where $(a,b)$ are some real-valued coefficients. The following result is well-known for
the spatial dynamical system related to the focusing NLS equation.

\begin{proposition}
\label{prop-ODE}
For every $(a,b) \in \mathbb{R}^2$ and every $\epsilon \in \mathbb{R}$,
there is a unique global bounded solution $\psi(x) \in C^{\infty}(\mathbb{R})$ of the initial-value
problem (\ref{initial-value}).
\end{proposition}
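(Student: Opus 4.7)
The plan is to treat (\ref{initial-value}) as a first-order Hamiltonian system in $\mathbb{R}^2$ and combine standard ODE theory with conservation of the associated mechanical energy. First I would apply Picard--Lindel\"{o}f to the system $\Psi' = F(\Psi)$, $\Psi = (\psi, \psi')^T$, with $F(\psi,\psi') = (\psi', \epsilon^2 \psi - 2|\psi|^2 \psi)$; since $F$ is locally Lipschitz (in fact real-analytic when $\psi$ is viewed as a pair of real components), this yields a unique local $C^1$ solution. Because the initial data $(a,b)$ are real and the restriction of the equation to real $\psi$ reads $\psi'' - \epsilon^2 \psi + 2\psi^3 = 0$, a real local solution with the prescribed data exists and also satisfies the complex system; uniqueness then forces the local solution to be real-valued.

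Next I would introduce the conserved quantity obtained by multiplying the equation by $\psi'$,
\begin{equation*}
H(\psi,\psi') = \tfrac{1}{2}(\psi')^2 - \tfrac{1}{2}\epsilon^2 \psi^2 + \tfrac{1}{2}\psi^4,
\end{equation*}
which is constant along trajectories. The potential $V(\psi) = -\tfrac{1}{2}\epsilon^2 \psi^2 + \tfrac{1}{2}\psi^4$ is bounded below (by $-\epsilon^4/8$) and coercive, i.e.\ $V(\psi) \to +\infty$ as $|\psi| \to \infty$. Consequently every level set $\{H = H(a,b)\}$ is a bounded subset of $\mathbb{R}^2$, which gives a uniform a priori bound on $|\psi(x)|$ and $|\psi'(x)|$ in terms of $(a,b,\epsilon)$ alone. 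The standard continuation argument for ODEs (no finite-time blow-up) then produces a unique global solution $\psi \in C^1(\mathbb{R})$ that is bounded together with its derivative on all of $\mathbb{R}$.

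Finally I would upgrade regularity by bootstrap: the equation rewritten as $\psi'' = \epsilon^2 \psi - 2\psi^3$ shows that whenever $\psi \in C^k(\mathbb{R})$ the right-hand side is in $C^k(\mathbb{R})$, hence $\psi \in C^{k+2}(\mathbb{R})$. Iterating from $k = 1$ yields $\psi \in C^\infty(\mathbb{R})$.

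This statement is essentially textbook, so I do not expect a genuine obstacle; the only points that require a moment of care are the passage from the not-holomorphic nonlinearity $|\psi|^2 \psi$ to a Lipschitz vector field in real variables (handled by viewing $\psi$ as a pair of real components), and the verification that the coerciveness of $V$ really does trap every trajectory, irrespective of the sign or size of $H(a,b)$. Once coerciveness is in hand, global existence, boundedness and smoothness follow in the order indicated above.
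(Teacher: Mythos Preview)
Your proposal is correct and follows essentially the same route as the paper: local existence via Picard, global extension and boundedness from the conserved energy $E=(\psi')^2-\epsilon^2\psi^2+\psi^4$ (your $H$ is $E/2$), and smoothness by bootstrap. If anything, you spell out the coercivity of the potential and the real-vs-complex issue more carefully than the paper does; since the initial data $(a,b)$ are declared real and $\phi$ is real-valued throughout, the paper simply treats the equation as a real ODE from the outset.
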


\begin{proof}
By the standard Picard methods, a unique local real-valued $C^1$ solution $\psi$ exists
for every $(a,b) \in \mathbb{R}^2$ and every $\epsilon \in \mathbb{R}$.
By the conservation of the first-order invariant,
\begin{equation}
\label{first-order-invariant}
E := \left( \partial_x \psi \right)^2 - \epsilon^2 \psi^2 + \psi^4 = {\rm const},
\end{equation}
the local $C^1$ solution is extended as a global solution $\psi(x) \in C^1(\mathbb{R})$.
Moreover, for every $x \in \mathbb{R}$, both $\psi(x)$ and $\psi'(x)$ remain bounded.
By bootstrap arguments, the $C^1$ solution $\psi$ is now extended as a smooth solution in $x$
since the vector field of the differential equation (\ref{initial-value}) is smooth in real variable $\psi$.
\end{proof}

Using the translational invariance and the unique global solution $\psi(x) \in C^{\infty}(\mathbb{R})$
of the initial-value problem (\ref{initial-value}) given by Proposition \ref{prop-ODE},
we can now solve the following initial-value problems on finite intervals
for every $n \in \mathbb{Z}$:
\begin{equation}
\label{initial-value-1}
\left\{ \begin{array}{l} \partial_x^2 \phi_{n,0} - \epsilon^2 \phi_{n,0} + 2 |\phi_{n,0}|^{2} \phi_{n,0} =  0, \qquad x \in [nP,nP+L], \\
\phi_{n,0}(nP) = a_n, \\ \partial_x \phi_{n,0}(nP) = b_n \end{array} \right.
\end{equation}
and
\begin{equation}
\label{initial-value-2}
\left\{ \begin{array}{l} \partial_x^2 \phi_{n,+} - \epsilon^2 \phi_{n,+} + 2 |\phi_{n,+}|^{2} \phi_{n,+} =  0, \qquad x \in [nP+L,(n+1)P], \\
\phi_{n,+}(nP+L) = c_n, \\ \partial_x \phi_{n,+}(nP+L) = d_n \end{array} \right.
\end{equation}
where $\{ a_n, b_n, c_n, d_n \}$ are some real-valued coefficients. By the existence and uniqueness result of Proposition
\ref{prop-ODE} and the translational invariance, we obtain unique solutions of the initial-value problems
(\ref{initial-value-1}) and (\ref{initial-value-2}) in the following form:
\begin{equation}
\label{correspondence-phi}
\phi_{n,0}(x) = \psi(x-nP;a_n,b_n,\epsilon), \quad \phi_{n,+}(x) = \psi(x-nP-L;c_n,d_n,\epsilon).
\end{equation}

\begin{remark}
The function $\psi$ for the initial-value problem (\ref{initial-value}) can be expressed explicitly in
terms of Jacobi elliptic functions. This approach was used in the literature \cite{Waltner}
to obtain some information about standing waves on various metric graphs. In our approach,
we avoid Jacobi elliptic functions and rely on the general perturbation theory in small $\epsilon$.
The same general approach can also be applied to other nonlinear problems (e.g. with higher-order
power functions), where explicit solutions of the initial-value problem (\ref{initial-value})
are not available.
\end{remark}

Keeping in mind the reduction (\ref{reduction-2}), we can now satisfy the boundary
conditions (\ref{keq1-red}) and (\ref{keq4-red}) by using the unique solutions given by (\ref{correspondence-phi}).
Boundary conditions at the vertices $\{ nP+L \}_{n \in \mathbb{Z}}$ yield
\begin{equation}
\left\{ \begin{array}{l} c_n = \psi(L;a_n,b_n,\epsilon), \\
2 d_n = \partial_x \psi(L;a_n,b_n,\epsilon), \end{array} \right.
\label{nonlinear-system-1}
\end{equation}
whereas the boundary conditions at the vertices $\{ nP \}_{n \in \mathbb{Z}}$
yield
\begin{equation}
\left\{ \begin{array}{l} a_{n+1} = \psi(\pi;c_n,d_n,\epsilon), \\
b_{n+1} = 2 \partial_x \psi(\pi;c_n,d_n,\epsilon). \end{array} \right.
\label{nonlinear-system-2}
\end{equation}
Eliminating $\{ c_n, d_n \}_{n \in \mathbb{Z}}$ from system (\ref{nonlinear-system-1}) and (\ref{nonlinear-system-2}),
we obtain the two-dimensional nonlinear discrete map in the form
\begin{equation}
\label{nonlinear-system-3}
\left[ \begin{array}{l} a_{n+1} \\ b_{n+1} \end{array} \right] =
\left[ \begin{array}{l} \psi(\pi;\psi(L;a_n,b_n,\epsilon),\frac{1}{2} \partial_x \psi(L;a_n,b_n,\epsilon),\epsilon) \\
2 \partial_x \psi(\pi;\psi(L;a_n,b_n,\epsilon),\frac{1}{2} \partial_x \psi(L;a_n,b_n,\epsilon),\epsilon) \end{array} \right].
\end{equation}

Next we establish the reversibility of the two-dimensional discrete map (\ref{nonlinear-system-3})
about two natural centers of symmetries. The first symmetry
\begin{equation}
\label{symmetry-1}
a_{-n} = c_{n}, \quad b_{-n} = -2 d_n, \quad n \in \mathbb{Z},
\end{equation}
corresponds to the solution $\phi \in \mathcal{D}$ of the stationary equation (\ref{statNLS-eps})
symmetric about the midpoint $x = L/2$ in the $0$-th central link. The other symmetry
\begin{equation}
\label{symmetry-2}
a_{-n} = c_{n+1}), \quad b_{-n} = -2 d_{n+1}, \quad n \in \mathbb{Z},
\end{equation}
corresponds to the solution $\phi \in \mathcal{D}$ of the stationary equation (\ref{statNLS-eps})
symmetric about the midpoint $x = L + \pi/2$ in the $0$-th circle. Although the
constraints (\ref{symmetry-1}) and (\ref{symmetry-2}) involve infinitely many relations
between solutions of the nonlinear discrete map (\ref{nonlinear-system-3}),
we show that each symmetry is satisfied with only one constraint.

\begin{proposition}
\label{prop-symmetry}
The symmetry (\ref{symmetry-1}) on the solutions of the nonlinear discrete map (\ref{nonlinear-system-3})
is satisfied if and only if $(a_0,b_0)$ satisfies the following reversibility constraint:
\begin{equation}
\label{symmetry-constraint-1}
\partial_x \phi_{0,0}(L/2) = \partial_x \psi(L/2;a_0,b_0,\epsilon) = 0.
\end{equation}
The symmetry (\ref{symmetry-2}) is satisfied if and only if $(c_0,d_0)$ satisfies the following
reversibility condition:
\begin{equation}
\label{symmetry-constraint-2}
\partial_x \phi_{0,+}(L+\pi/2) = \partial_x \psi(\pi/2;c_0,d_0,\epsilon) = 0.
\end{equation}
\end{proposition}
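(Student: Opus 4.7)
The plan is to exploit the reversibility of the initial-value problem (\ref{initial-value}) under $x \mapsto -x$. By the uniqueness in Proposition \ref{prop-ODE}, this manifests as the identity
$\psi(-x; a, b, \epsilon) = \psi(x; a, -b, \epsilon)$ for all $x \in \mathbb{R}$. A direct consequence, used repeatedly below, is that whenever $\partial_x \psi(x_0; a, b, \epsilon) = 0$ one has $\psi(2x_0 - x; a, b, \epsilon) = \psi(x; a, b, \epsilon)$; that is, $\psi$ is symmetric about $x_0$.

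I would handle the first reversibility statement in two halves. For $(\Rightarrow)$, putting $n = 0$ in (\ref{symmetry-1}) and combining with (\ref{nonlinear-system-1}) yields $\psi(L; a_0, b_0, \epsilon) = a_0$ and $\partial_x \psi(L; a_0, b_0, \epsilon) = -b_0$. The reflected function $x \mapsto \psi(L - x; a_0, b_0, \epsilon)$ solves the same ODE and matches these data at $x = 0$, so by uniqueness it coincides with $\psi(x; a_0, b_0, \epsilon)$, forcing $\partial_x \psi(L/2; a_0, b_0, \epsilon) = 0$. For $(\Leftarrow)$, starting from (\ref{symmetry-constraint-1}) the reflection identity above gives $c_0 = a_0$ and $2d_0 = -b_0$, which is the $n = 0$ case of (\ref{symmetry-1}). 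I then propagate by induction on $n \geq 0$: assuming $a_{-n} = c_n$ and $b_{-n} = -2d_n$, I invert the discrete map (\ref{nonlinear-system-1})--(\ref{nonlinear-system-2}) step by step, going from $(a_{-n}, b_{-n})$ backward to $(c_{-n-1}, d_{-n-1})$ and then to $(a_{-n-1}, b_{-n-1})$. Each inversion is implemented by running $\psi$ backward in $x$ via $\psi(-x; \cdot, \cdot, \epsilon) = \psi(x; \cdot, -\cdot, \epsilon)$. The sign flip produced by this identity combines with the inductive relation $b_{-n} = -2d_n$ to yield $c_{-n-1} = a_{n+1}$ and $d_{-n-1} = -b_{n+1}/2$; one further application of the map then gives $a_{-n-1} = c_{n+1}$ and $b_{-n-1} = -2d_{n+1}$, closing the induction.

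The second reversibility is proved by the same argument with the center of reflection shifted to $L + \pi/2$, the midpoint of the $0$-th semicircle. Equivalently, one conjugates the discrete map by the auxiliary half-step (\ref{nonlinear-system-1}) that takes $(a_n, b_n) \mapsto (c_n, d_n)$; the roles of the link of length $L$ and the semicircle of length $\pi$ are interchanged, and $(c_0, d_0)$ replaces $(a_0, b_0)$ as the initial data to which the reversibility identity is applied.

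I expect the only nontrivial part of the argument to be bookkeeping: correctly tracking the factors $2$ and $1/2$ introduced by the Kirchhoff conditions (\ref{nonlinear-system-1}) and (\ref{nonlinear-system-2}) through the reversibility, so that the normalizations of $b_n$ versus $d_n$ cancel in exactly the way the induction demands. Once this is done the remainder is mechanical, the substance of the proof lying entirely in the ODE reversibility identity together with the observation that both Kirchhoff conditions respect it after the appropriate rescaling of the derivative.
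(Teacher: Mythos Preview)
Your proposal is correct and rests on the same idea as the paper's proof: the reversibility of the ODE (\ref{initial-value}) under $x\mapsto -x$ combined with uniqueness of solutions. The paper formulates this at the level of the full solution $\phi$ on the graph $\Gamma$---the graph is symmetric about $x=L/2$, so by uniqueness $\phi$ is symmetric about $L/2$ if and only if (\ref{symmetry-constraint-1}) holds, and then the correspondence (\ref{correspondence-phi}) immediately translates this into the sequence relations (\ref{symmetry-1})---whereas you work directly with the discrete map, establishing the $n=0$ case from the reflection identity and then propagating to all $n$ by an explicit induction that inverts (\ref{nonlinear-system-1})--(\ref{nonlinear-system-2}) via $\psi(-x;a,b,\epsilon)=\psi(x;a,-b,\epsilon)$. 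Your version has the merit of making transparent exactly how the Kirchhoff factors $2$ and $1/2$ cancel through the reversibility (which the paper's one-line appeal to graph symmetry leaves implicit), while the paper's version has the advantage of making the geometric origin of the two reversibilities obvious and avoiding the induction bookkeeping altogether. Substantively the two arguments are the same.
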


\begin{proof}
Since the periodic graph $\Gamma$ is symmetric about the point $x = L/2$
and the stationary NLS equation (\ref{statNLS-eps}) involves only second-order derivatives,
the existence and uniqueness theory for differential equations implies that
the solution $\phi \in \mathcal{D}$ of the stationary NLS equation (\ref{statNLS-eps})
is symmetric about the point $x = L/2$ if and only if
it satisfies the condition (\ref{symmetry-constraint-1}). By the constructions
of the solution $\phi \in \mathcal{D}$ with the explicit formula (\ref{correspondence-phi}),
the symmetry (\ref{symmetry-constraint-1}) translates uniquely to the symmetry (\ref{symmetry-1})
on solutions of the discrete map (\ref{nonlinear-system-3}). Thus,
solutions of the two-dimensional discrete map (\ref{nonlinear-system-3})
satisfy (\ref{symmetry-1}) if and only if $(a_0,b_0)$ satisfy (\ref{symmetry-constraint-1}).

The statement for the symmetry (\ref{symmetry-2}) and the constraint (\ref{symmetry-constraint-2})
on $(c_0,d_0)$ is proved from the symmetry of the periodic graph $\Gamma$ about the point $x = L + \pi/2$.
\end{proof}

Although the discrete map (\ref{nonlinear-system-3}) can be used for every solution
of the stationary equation (\ref{statNLS-eps}), the results are not so explicit. Moreover,
many solution branches may coexist for the same values of $\epsilon \in \mathbb{R}$.
Therefore, we simplify the consideration for small solutions $\phi \in \mathcal{D}$
corresponding to small values of $\epsilon \in \mathbb{R}$. This simplification is based on
the following approximation result.

\begin{lemma}
\label{proposition-expansion}
Consider the initial-value problem (\ref{initial-value}) with the scaled initial conditions
\begin{equation}
\label{scaling-a-b}
a = \epsilon \alpha, \quad b = \epsilon^2 \beta
\end{equation}
where $\alpha$ and $\beta$ are some $\epsilon$-independent real-valued coefficients. For every $x_0 > 0$,
there exists $\epsilon_0 > 0$ such that the initial-value problem (\ref{initial-value})
for every $\epsilon \in (0,\epsilon_0)$ admits a unique solution $\psi(x) \in C^{\infty}(0,x_0)$,
which is smooth in $\epsilon$ and satisfies the power series expansion
\begin{equation}
\label{expansion-in-eps}
\psi(x;\epsilon \alpha, \epsilon^2 \beta, \epsilon) = \epsilon \left[ \alpha +
\epsilon \beta x + \frac{1}{2} \epsilon^2 \alpha (1 - 2 \alpha^2) x^2 + \frac{1}{6} \epsilon^3 (1 - 6 \alpha^2) \beta x^3
+ \mathcal{O}_{L^{\infty}(0,x_0)}(\epsilon^4) \right].
\end{equation}
\end{lemma}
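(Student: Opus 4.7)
The natural first move is to extract the small factor by setting $\psi(x) = \epsilon \Phi(x;\epsilon)$. Under the scaling (\ref{scaling-a-b}), the initial-value problem (\ref{initial-value}) becomes
\begin{equation*}
\Phi'' = \epsilon^2 \bigl( \Phi - 2 \Phi^3 \bigr), \qquad \Phi(0) = \alpha, \qquad \Phi'(0) = \epsilon \beta,
\end{equation*}
which is a regular perturbation of the trivial equation $\Phi'' = 0$. The vector field on the right is a polynomial in $(\Phi,\epsilon)$, so classical smooth-dependence theorems for ODEs (or the contraction-mapping formulation of the Picard iteration in $C^0([0,x_0])$ with parameter $\epsilon$) yield, for any fixed $x_0>0$, an $\epsilon_0 > 0$ such that the map $(\epsilon,x) \mapsto \Phi(x;\epsilon)$ is $C^\infty$ on $[-\epsilon_0,\epsilon_0] \times [0,x_0]$. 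The existence of a global smooth $\psi$ is already guaranteed by Proposition \ref{prop-ODE}; the point of the rescaling is to turn the problem into one where $\epsilon$ appears smoothly and small, so that the Taylor expansion of $\Phi$ in $\epsilon$ is justified uniformly in $x$.

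With smooth $\epsilon$-dependence in hand, I would write $\Phi = \Phi_0 + \epsilon \Phi_1 + \epsilon^2 \Phi_2 + \epsilon^3 \Phi_3 + R(x;\epsilon)$ and substitute. Matching powers of $\epsilon$ gives the triangular system $\Phi_0'' = 0$, $\Phi_1'' = 0$, $\Phi_2'' = \Phi_0 - 2\Phi_0^3$, $\Phi_3'' = \Phi_1 - 6 \Phi_0^2 \Phi_1$ with initial data $(\Phi_0(0),\Phi_0'(0)) = (\alpha,0)$, $(\Phi_1(0),\Phi_1'(0)) = (0,\beta)$ and $\Phi_j(0) = \Phi_j'(0) = 0$ for $j \geq 2$. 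Integrating from zero gives
\begin{equation*}
\Phi_0 = \alpha, \quad \Phi_1 = \beta x, \quad \Phi_2 = \tfrac{1}{2}\alpha(1-2\alpha^2)\, x^2, \quad \Phi_3 = \tfrac{1}{6}\beta(1-6\alpha^2)\, x^3,
\end{equation*}
which, after multiplication by the outer $\epsilon$ factor, reproduce exactly the four explicit terms in (\ref{expansion-in-eps}).

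The remainder $R$ solves a linear second-order equation driven by a source that is $\mathcal{O}(\epsilon^4)$ in $L^\infty(0,x_0)$, with zero initial data. A Gronwall-type estimate (or a fixed-point argument in $C^1([0,x_0])$, viewing the cubic nonlinearity as a small Lipschitz perturbation since $\Phi$ stays in a bounded set) then gives $\|R\|_{L^\infty(0,x_0)} = \mathcal{O}(\epsilon^4)$, with the implicit constant depending on $x_0,\alpha,\beta$ but not on $\epsilon \in (0,\epsilon_0)$. This is the only step that requires genuine estimation; it is routine for a finite interval $[0,x_0]$ but would fail on the whole line, which is precisely why the statement restricts to finite intervals and, in the application of the lemma, $x_0$ will be taken to be $\max\{L,\pi\}$. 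Combining the expansion for $\Phi$ with $\psi = \epsilon \Phi$ yields (\ref{expansion-in-eps}).
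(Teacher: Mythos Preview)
Your proof is correct and follows essentially the same strategy as the paper: rescale $\psi = \epsilon\Phi$ to obtain a regular perturbation of $\Phi''=0$, invoke smooth dependence on $\epsilon$, compute the Taylor coefficients by matching powers, and bound the remainder via Gronwall on the finite interval. The only cosmetic difference is that the paper absorbs the $\epsilon$-dependent initial velocity into the leading term, writing $\varphi = \varphi_0 + \epsilon^2\varphi_2 + \epsilon^4\tilde\varphi_\epsilon$ with $\varphi_0(x) = \alpha + \epsilon\beta x$, whereas you expand cleanly in integer powers of $\epsilon$ with $\epsilon$-independent coefficients $\Phi_j$.
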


\begin{proof}
Using the scaling transformation (\ref{scaling-a-b}), we scale the unique real-valued
solution of the initial-value problem (\ref{initial-value}) by
$\psi(x) = \epsilon \varphi(x)$ and obtain
\begin{equation}
\label{initial-value-5}
\left\{ \begin{array}{l} \partial_x^2 \varphi = \epsilon^2 (1 - 2 \varphi^2) \varphi, \\
\varphi(0) = \alpha, \\ \partial_x \varphi(0) = \epsilon \beta. \end{array} \right.
\end{equation}
From smoothness of the initial-value problem (\ref{initial-value-5}) in $\epsilon$,
we have smoothness of the unique global solution $\varphi(x) \in C^{\infty}(\mathbb{R})$ in $\epsilon$.
Therefore, the unique solution of the initial-value problem (\ref{initial-value-5}) satisfies
the regular power series expansion given by
\begin{equation}
\label{expansions-varphi}
\varphi(x;\epsilon) = \varphi_0(x) + \epsilon^2 \varphi_2(x) + \epsilon^4 \tilde{\varphi}_{\epsilon}(x),
\end{equation}
where $\varphi_0''(x) = 0$, $\varphi_2''(x) = (1-2\varphi_0^2) \varphi_0$ and
\begin{eqnarray*}
\tilde{\varphi}_{\epsilon}''(x) & = & (1-6 \varphi_0^2) \varphi_2 +
\epsilon^2 (1 - 6 \varphi_0^2 - 12 \epsilon^2 \varphi_0 \varphi_2 - 6 \epsilon^4 \varphi_2^2) \tilde{\varphi}_{\epsilon} \\
& \phantom{t} & - 6 \epsilon^2 \varphi_0 \varphi_2^2 - 2 \epsilon^4 \varphi_2^3 - 6 \epsilon^6 \varphi_0 \tilde{\varphi}_{\epsilon}^2
- 6 \epsilon^8 \varphi_2 \tilde{\varphi}_{\epsilon}^2 - 2 \epsilon^{10} \tilde{\varphi}_{\epsilon}^3.
\end{eqnarray*}
From the initial values, we have the unique expressions
for $\varphi_0(x) = \alpha + \epsilon \beta x$ and
$$
\varphi_2(x) = \frac{1}{2} (1 - 2\alpha^2) \alpha x^2 + \frac{1}{6} \epsilon (1 - 6 \alpha^2) \beta x^3 - \frac{1}{2} \epsilon^2 \alpha \beta^2 x^4
- \frac{1}{10} \epsilon^3 \beta^3 x^5.
$$
Also, by standard Gronwall's inequality, we obtain that
$\tilde{\varphi}_{\epsilon}(x)$ is bounded in $L^{\infty}(0,x_0)$ for every $x_0 > 0$ as $\epsilon \to 0$.
Substituting expressions for $\varphi_0$ and $\varphi_2$ in the power series expansion (\ref{expansions-varphi})
and neglecting the $\mathcal{O}_{L^{\infty}(0,x_0)}(\epsilon^4)$ terms, we obtain (\ref{expansion-in-eps}).
\end{proof}

By using scaling (\ref{scaling-a-b}) and expansion (\ref{expansion-in-eps}),
we introduce the scaling transformation for solutions of the discrete maps
(\ref{nonlinear-system-1}) and (\ref{nonlinear-system-2}):
\begin{equation}
\label{scaling-map}
a_n = \epsilon \alpha_n, \quad b_n = \epsilon^2 \beta_n, \quad
c_n = \epsilon \gamma_n, \quad d_n = \epsilon^2 \delta_n, \quad
n \in \mathbb{Z}.
\end{equation}
Using the connection formulas
\begin{equation}
\label{gamma-delta-in-terms-of-alpha-beta}
\left\{ \begin{array}{l} \gamma_n = \alpha_n +
\epsilon \beta_n L + \frac{1}{2} \epsilon^2 \alpha_n (1 - 2 \alpha_n^2) L^2 + \mathcal{O}(\epsilon^3), \\
2 \delta_n = \beta_n + \epsilon \alpha_n (1 - 2 \alpha_n^2) L + \frac{1}{2} \epsilon^2 \beta_n (1 - 6 \alpha_n^2) L^2 + \mathcal{O}(\epsilon^3),
\end{array} \right.
\end{equation}
we rewrite the discrete map (\ref{nonlinear-system-3}) in the explicit asymptotic form:
\begin{equation}
\label{nonlinear-system-4}
\left\{ \begin{array}{l} \alpha_{n+1} = \alpha_n + \epsilon (L + \pi/2) \beta_n +
\frac{1}{2} \epsilon^2 (L^2 + \pi L + \pi^2) (1 - 2 \alpha_n^2) \alpha_n \\
\phantom{texttexttext}
+ \frac{1}{12} \epsilon^3 (2 L^3 + 3 L^2 \pi + 6 L \pi^2 + \pi^3) (1 - 6 \alpha_n^2) \beta_n
+ \mathcal{O}(\epsilon^4), \\
\beta_{n+1} = \beta_n + \epsilon (L + 2 \pi) (1 - 2 \alpha_n^2) \alpha_n
+ \frac{1}{4} \epsilon^2 (2 L^2 + 4 L \pi + \pi^2) (1 - 6 \alpha_n^2) \beta_n + \mathcal{O}(\epsilon^3). \end{array} \right.
\end{equation}

Next, we construct a suitable approximation for solutions of the discrete map (\ref{nonlinear-system-4}).
In particular, we consider a slowly varying solution in the form
\begin{equation}
\label{slowly-varying}
\alpha_n = A(X), \quad \beta_n = B(X), \quad X = \epsilon n, \quad n \in \mathbb{Z},
\end{equation}
with $A(X), B(X) \in C^{\infty}(\mathbb{R})$. By substituting (\ref{slowly-varying})
into (\ref{nonlinear-system-4}), using Taylor series expansions, and truncating at the leading-order terms, we obtain
\begin{equation}
\label{nonlinear-system-5}
\left\{ \begin{array}{l} A'(x) = (L + \pi/2) B(x), \\
B'(x) = (L + 2 \pi) (1 - 2 A^2) A, \end{array} \right.
\end{equation}
which is equivalent to the second-order differential equation
\begin{equation}
\label{nonlinear-system-6}
A''(x) = \nu^2 (1 - 2 A^2) A, \quad \nu^2 := (L + \pi/2)(L + 2\pi).
\end{equation}
System (\ref{nonlinear-system-5}) is satisfied with the exact localized solution
\begin{equation}
\label{approximation-soliton}
A(X) = {\rm sech}(\nu X), \quad B(X) = - \mu \tanh(\nu X) {\rm sech}(\nu X), \quad X \in \mathbb{R},
\end{equation}
where $\mu^2 := (L + 2\pi)(L + \pi/2)$. In order to prove persistence of the approximation (\ref{slowly-varying}) and
(\ref{approximation-soliton}) among the reversible solutions of the discrete map (\ref{nonlinear-system-4}),
we need the following result.

\begin{proposition}
\label{proposition-inversion}
For a given $f \in \ell^2(\mathbb{Z})$ satisfying the reversibility symmetry $f_{n} = f_{1-n}$ for every $n \in \mathbb{Z}$,
consider solutions of the linearized difference equation
\begin{equation}
\label{linear-inversion}
-\frac{\alpha_{n+1}-2\alpha_n+\alpha_{n-1}}{\epsilon^2} + \nu^2 (1 - 6 A^2(\epsilon n-\epsilon/2)) \alpha_n = f_n, \quad n \in \mathbb{Z},
\end{equation}
where $A(X) = {\rm sech}(\nu X)$. For sufficiently small $\epsilon > 0$, there exists a unique solution $\alpha \in \ell^2(\mathbb{Z})$
satisfying the reversibility symmetry $\alpha_n = \alpha_{1-n}$ for every $n \in \mathbb{Z}$.
Moreover there is a positive $\epsilon$-independent constant $C$ such that
\begin{equation}
\label{bound-on-inverse}
\epsilon^{-1} \left\| \sigma_+ \alpha - \alpha \right\|_{\ell^2}  \leq C \| f \|_{\ell^2}, \quad \| \alpha \|_{\ell^2} \leq C \| f \|_{\ell^2},
\end{equation}
where $\sigma_+$ is the shift operator defined by $(\sigma_+ \alpha)_n := \alpha_{n+1}$, $n \in \mathbb{Z}$.
\end{proposition}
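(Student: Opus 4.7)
The plan is to deduce invertibility of $\mathcal{L}_\epsilon$ on the symmetric subspace from invertibility of its continuum limit, via a contradiction and compactness argument. Setting $X := \epsilon(n-1/2)$ identifies the discrete symmetry $\alpha_n = \alpha_{1-n}$ with evenness in $X$, and the natural continuum operator is
\begin{equation*}
L := -\partial_X^2 + \nu^2\bigl(1 - 6\,\mathrm{sech}^2(\nu X)\bigr) \quad \text{on } L^2(\mathbb{R}).
\end{equation*}
Differentiating the identity $A'' = \nu^2(1-2A^2)A$ gives $LA'=0$, and since $A'(X) = -\nu\tanh(\nu X)\,\mathrm{sech}(\nu X)$ is odd, the kernel of $L$ is odd. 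Combined with the essential spectrum $[\nu^2,\infty)$ and the classical P\"oschl--Teller eigenstructure (the only remaining discrete eigenvalue is $-3\nu^2$, with \emph{even} eigenfunction $\mathrm{sech}^2(\nu X)$), this shows that $L$ restricts to an invertible map $H^2_e(\mathbb{R}) \to L^2_e(\mathbb{R})$, where the subscript ${}_e$ denotes evenness about $X=0$.

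I would then introduce the isometric embedding $(\mathcal{J}_\epsilon\alpha)(X) := \epsilon^{1/2}\alpha_n$ for $X \in (\epsilon(n-1),\epsilon n]$, so that $\|\mathcal{J}_\epsilon\alpha\|_{L^2} = \|\alpha\|_{\ell^2}$ and the symmetry of $\alpha$ translates to evenness of $\mathcal{J}_\epsilon\alpha$. To obtain the uniform bound $\|\alpha\|_{\ell^2}\leq C\|f\|_{\ell^2}$ on symmetric sequences, I would argue by contradiction: assume there exist $\epsilon_k \downarrow 0$ and symmetric $\alpha^{(k)}$ with $\|\alpha^{(k)}\|_{\ell^2}=1$ and $\|\mathcal{L}_{\epsilon_k}\alpha^{(k)}\|_{\ell^2}\to 0$. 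Pairing the equation with $\alpha^{(k)}$ yields the energy identity
\begin{equation*}
\epsilon_k^{-2}\,\|\sigma_+\alpha^{(k)} - \alpha^{(k)}\|_{\ell^2}^2 + \sum_{n\in\mathbb{Z}} \nu^2\bigl(1 - 6A^2(\epsilon_k n - \epsilon_k/2)\bigr)\,|\alpha^{(k)}_n|^2 = o(1),
\end{equation*}
which simultaneously provides a uniform discrete $H^1$ bound and, because $\nu^2(1-6A^2)\to \nu^2>0$ as $|X|\to\infty$, forces $\ell^2$-mass concentration in a bounded window of $X$. Hence $U_k := \mathcal{J}_{\epsilon_k}\alpha^{(k)}$ is bounded in $H^1(\mathbb{R})$ and tight, so along a subsequence it converges weakly in $H^1$ and strongly in $L^2_{\mathrm{loc}}$ to some $U_\infty\in H^1_e$ with $\|U_\infty\|_{L^2}=1$ that weakly solves $LU_\infty=0$, contradicting what was shown above.

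Once this uniform invertibility is in hand, the second inequality in (\ref{bound-on-inverse}) is immediate. The sharper first-difference bound $\epsilon^{-1}\|\sigma_+\alpha-\alpha\|_{\ell^2}\leq C\|f\|_{\ell^2}$ comes from revisiting the energy identity applied to $\mathcal{L}_\epsilon \alpha = f$: Cauchy--Schwarz gives $|\langle f,\alpha\rangle| \leq \|f\|_{\ell^2}\|\alpha\|_{\ell^2}\leq C\|f\|_{\ell^2}^2$, which, combined with the $L^\infty$ bound on the potential, controls the kinetic term $\epsilon^{-2}\|\sigma_+\alpha-\alpha\|_{\ell^2}^2$ by $C\|f\|_{\ell^2}^2$.

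The main obstacle is the tightness step -- ruling out loss of $\ell^2$-mass to spatial infinity along the subsequence, since without tightness the weak limit $U_\infty$ could be zero and the contradiction would collapse. I would handle this by an IMS-type cutoff $\chi_R$ supported in $\{|X|>R\}$: testing the equation against $\chi_R^2\alpha^{(k)}$ and estimating the commutator between $\chi_R$ and the discrete Laplacian (which is $O(R^{-2})$) produces an estimate of the form $\sum_{|\epsilon_k n|>R} |\alpha^{(k)}_n|^2 \leq C(o(1) + R^{-2})$, so taking $R$ large before sending $k\to\infty$ forces $\|U_\infty\|_{L^2}=1$. All remaining compactness and weak-limit manipulations are standard discrete-to-continuum machinery.
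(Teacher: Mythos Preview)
Your argument is correct and reaches the same conclusion, but by a genuinely different route than the paper. The paper argues by spectral perturbation: it observes that the discrete operator $\mathcal{L}_\epsilon$ converges (as $\epsilon\to 0$) to the continuum Schr\"odinger operator $L_\infty=-\partial_X^2+\nu^2(1-6\,\mathrm{sech}^2(\nu X))$, whose spectrum consists of $[\nu^2,\infty)$ together with one negative eigenvalue and a simple zero eigenvalue with odd eigenfunction $A'$; then it invokes continuity of isolated eigenvalues in $\epsilon$ to conclude that $\mathcal{L}_\epsilon$ has a single eigenvalue near zero (with approximately odd eigenvector) and is therefore uniformly invertible on the reversible subspace. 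Your approach instead bypasses the spectral continuity principle entirely, replacing it with a direct contradiction/compactness argument via the piecewise-constant embedding $\mathcal{J}_\epsilon$ into $L^2(\mathbb{R})$. The payoff is self-containment: you never need to justify in what sense isolated eigenvalues of $\mathcal{L}_\epsilon$ depend continuously on $\epsilon$, which the paper leaves implicit. The cost is the extra compactness machinery. Both proofs close with the identical quadratic-form computation for the first-difference bound.

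One minor simplification: you do not actually need the full tightness $\|U_\infty\|_{L^2}=1$, and hence the IMS cutoff step is more than necessary. The energy identity already gives $\nu^2 = \nu^2\|\alpha^{(k)}\|_{\ell^2}^2 \le o(1) + 6\nu^2\sum_n A^2(\epsilon_k n-\epsilon_k/2)\,|\alpha_n^{(k)}|^2$, and since $A^2$ decays at infinity this forces $\sum_{|\epsilon_k n|\le R}|\alpha_n^{(k)}|^2 \ge c>0$ for $R$ large enough. Strong $L^2_{\mathrm{loc}}$ convergence then yields $U_\infty\neq 0$, which is all the contradiction requires.
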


\begin{proof}
As $\epsilon \to 0$, the finite difference operator in the linearized difference equation (\ref{linear-inversion})
converges to the Schr\"{o}dinger operator
\begin{equation}
\label{linear-operator-2}
L_{\infty} := -\partial_X^2 + \nu^2 (1 - 6 A^2(X)), \quad A^2(X) = {\rm sech}^2(\nu X),
\end{equation}
where $X$ is now defined on the real line. The Schr\"{o}dinger operator (\ref{linear-operator-2})
provides a linearization of the second-order differential equation (\ref{nonlinear-system-6}).
As is well-known (see \cite{text-ode} and references therein),
the spectrum of $L_{\infty}$ consists of the continuous spectrum $\sigma_c(L_{\infty}) \in [\nu^2,\infty)$
and two isolated eigenvalues, one of which is negative
and the other one is at zero. The zero eigenvalue is related to the translational symmetry
and corresponds to the eigenfunction of $L_{\infty}$ spanned by $A'(X)$.
By continuity of isolated eigenvalues with respect to parameter $\epsilon$,
the linearized difference operator in the left-hand side of equation (\ref{linear-inversion}) admits
an eigenvalue near zero, while the rest of its spectrum is bounded away from zero.

Let us now impose the reversibility constraint $\alpha_0 = \alpha_1$ on solutions of the
linearized difference equation (\ref{linear-inversion}). If $f \in \ell^2(\mathbb{Z})$
satisfies the reversibility symmetry $f_{n} = f_{1-n}$ for every $n \in \mathbb{Z}$,
then the constraint $\alpha_0 = \alpha_1$ defines uniquely solutions
of the linearized difference equation (\ref{linear-inversion})
satisfying the reversibility symmetry $\alpha_{n} = \alpha_{1-n}$ for every $n \in \mathbb{Z}$.

Since $A'(-X) = -A'(X)$, for every $X \in \mathbb{R}$,
the Schr\"{o}dinger operator $L_{\infty}$ is invertible on the space of even functions.
Similarly, for sufficiently small values of $\epsilon$, the linearized difference operator is invertible
on sequence $f \in \ell^2(\mathbb{Z})$ satisfying the constraint $f_n = f_{1-n}$ for every $n \in \mathbb{Z}$.
Hence, we obtain the unique reversible
solution $\alpha \in \ell^2(\mathbb{Z})$ to the linearized difference equation (\ref{linear-inversion})
satisfying the second bound in (\ref{bound-on-inverse}).

The first bound is found from the quadratic form associated with the linearized difference equation (\ref{linear-inversion}),
which can be written in the form
\begin{equation}
\label{quad-form}
\epsilon^{-2} \left\| \sigma_+ \alpha - \alpha \right\|_{\ell^2}^2 = \langle \alpha, f \rangle_{\ell^2} + \nu^2
\langle (6 A^2(\epsilon \cdot - \epsilon/2) - 1) \alpha, \alpha \rangle_{\ell^2}.
\end{equation}
By using Cauchy--Schwartz inequality in (\ref{quad-form}) and the second bound in (\ref{bound-on-inverse}), we obtain
the first bound in (\ref{bound-on-inverse}).
\end{proof}

With the help of Proposition \ref{proposition-inversion}, we prove the persistence of the approximation (\ref{slowly-varying}) and
(\ref{approximation-soliton}) among the reversible solutions of the discrete map (\ref{nonlinear-system-4}).

\begin{lemma}
\label{proposition-persistence}
Consider solutions of the discrete map (\ref{nonlinear-system-4}) in the perturbed form
\begin{equation}
\label{perturbed form}
\alpha_n = A(\epsilon n - \epsilon/2 + X_0) + \tilde{\alpha}_n, \quad \beta_n = B(\epsilon n - \epsilon/2 + X_0) + \tilde{\beta}_n, \quad n \in \mathbb{Z},
\end{equation}
where $X_0$ is a parameter. There exists $\epsilon_0 > 0$ and $C_0 > 0$ such that for every $\epsilon \in (0,\epsilon_0)$,
there exist a unique choice for $X_0$ and $(\tilde{\alpha},\tilde{\beta}) \in \ell^2(\mathbb{Z})$ satisfying
\begin{equation}
\label{estimate-final}
|X_0| + \| \tilde{\alpha} \|_{\ell^2} + \| \tilde{\beta} \|_{\ell^2} \leq C_0 \epsilon,
\end{equation}
such that $(\alpha,\beta) \in \ell^2(\mathbb{Z})$ solve the discrete map (\ref{nonlinear-system-4})
subject to the following reversibility constraint on $(\alpha_0,\beta_0)$:
\begin{equation}
\label{symmetry-constraint-1-reduced}
\partial_x \psi(L/2;\epsilon \alpha_0,\epsilon^2 \beta_0,\epsilon) = 0.
\end{equation}
\end{lemma}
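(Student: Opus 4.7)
The plan is a Lyapunov--Schmidt plus contraction mapping argument, with Proposition \ref{proposition-inversion} providing the key linear inversion. First, I would substitute the ansatz (\ref{perturbed form}) with $X_n := \epsilon n - \epsilon/2 + X_0$ into the discrete map (\ref{nonlinear-system-4}). Taylor-expanding the smooth profiles $A,B$ and using that $(A,B)$ solves the continuum system (\ref{nonlinear-system-5}) to cancel the $O(\epsilon)$ terms, the resulting equations for the correction take the schematic form
\begin{equation*}
\tilde\alpha_{n+1} - \tilde\alpha_n - \epsilon(L+\pi/2)\tilde\beta_n = \epsilon^2 p_n + N^{(1)}_n(\tilde\alpha,\tilde\beta;X_0),
\end{equation*}
\begin{equation*}
\tilde\beta_{n+1} - \tilde\beta_n - \epsilon(L+2\pi)(1 - 6A^2(X_n))\tilde\alpha_n = \epsilon^2 q_n + N^{(2)}_n(\tilde\alpha,\tilde\beta;X_0),
\end{equation*}
where $p_n, q_n$ are the pointwise residuals arising from the higher-order mismatch between the continuum limit and the discrete map, and $N^{(j)}$ collects terms at least quadratic in $(\tilde\alpha,\tilde\beta)$ with coefficients modulated by the exponentially localized profile $(A,B)$.

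Next, I would eliminate $\tilde\beta_n$ by solving the first equation and substituting into the second, obtaining a scalar second-order difference equation for $\tilde\alpha$ whose linear part coincides, up to an $O(\epsilon)$ operator-norm perturbation, with the operator in (\ref{linear-inversion}). Restricting to the reversible subspace $\{\tilde\alpha_n = \tilde\alpha_{1-n}\}$ removes the translational kernel spanned by $A'$ (odd about the center), and Proposition \ref{proposition-inversion} then provides an inverse $\mathcal{L}^{-1}$ bounded uniformly in $\epsilon$. The problem becomes a fixed point
\begin{equation*}
\tilde\alpha = \Phi_{X_0}(\tilde\alpha) := \mathcal{L}^{-1}\bigl(\text{residual} + N(\tilde\alpha;X_0)\bigr),
\end{equation*}
and since $N$ is at least quadratic in $(\tilde\alpha,\tilde\beta)$ with coefficients supported on the exponentially decaying profile, the $\ell^2 \hookrightarrow \ell^\infty$ embedding makes $\Phi_{X_0}$ a contraction on a ball $\{\|\tilde\alpha\|_{\ell^2} \leq C_0\epsilon\}$ for sufficiently small $\epsilon$. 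The Banach fixed-point theorem then yields a unique $\tilde\alpha = \tilde\alpha(X_0)\in\ell^2$ satisfying the advertised bound with smooth dependence on $X_0$, and $\tilde\beta(X_0)$ is recovered from the first equation with the same estimate.

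The final step is to pin down $X_0$ from the reversibility constraint (\ref{symmetry-constraint-1-reduced}). Using the expansion (\ref{expansion-in-eps}), this constraint is equivalent, after dividing by $\epsilon^2$, to the scalar equation
\begin{equation*}
g(X_0;\epsilon) := \beta_0(X_0) + \tfrac{\epsilon L}{2}\,\alpha_0(X_0)\bigl(1 - 2\alpha_0(X_0)^2\bigr) + O(\epsilon^2) = 0.
\end{equation*}
From the explicit soliton formula (\ref{approximation-soliton}), $\alpha_0(X_0) = A(X_0-\epsilon/2) + \tilde\alpha_0(X_0) = 1 + O(X_0,\epsilon)$ and $\beta_0(X_0) = B(X_0-\epsilon/2) + \tilde\beta_0(X_0) = -\mu\nu X_0 + O(\epsilon)$, so the linearization at $(X_0,\epsilon) = (0,0)$ equals $\partial_{X_0}g(0;0) = -\mu\nu \neq 0$. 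The implicit function theorem therefore produces a unique $X_0 = O(\epsilon)$, from which (\ref{estimate-final}) follows. The main technical obstacle is precisely this last step: verifying that $\partial_{X_0}g$ remains uniformly bounded away from zero as $\epsilon \to 0$, which requires careful control of the smooth $X_0$-dependence of the fixed point $\tilde\beta_0(X_0)$ through the contraction.
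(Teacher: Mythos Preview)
Your overall architecture matches the paper's: reduce to a scalar second--order difference equation, invoke Proposition~\ref{proposition-inversion} on the reversible subspace, and close with an implicit--function argument for $X_0$. The difference is the \emph{order} in which you couple the two steps, and this is where your argument has a gap.

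You first run the contraction for \emph{fixed but arbitrary} $X_0$, applying $\mathcal L^{-1}$ from Proposition~\ref{proposition-inversion} to the right--hand side. But that inverse is only defined on sequences with the symmetry $f_n=f_{1-n}$, and for $X_0\neq 0$ the residual terms generated by $A(\epsilon n-\epsilon/2+X_0)$ and $B(\epsilon n-\epsilon/2+X_0)$ are \emph{not} centered at $n=1/2$ and hence do not lie in the reversible subspace. Thus $\Phi_{X_0}$ is not well defined as written, and you cannot obtain $\tilde\alpha(X_0)$ for general $X_0$ before fixing the constraint. Simply projecting the right--hand side onto the reversible part would leave an infinite--dimensional anti--reversible remainder, not a single scalar bifurcation equation.

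The paper avoids this by reversing the order. It first rewrites the reversibility constraint (\ref{symmetry-constraint-1-reduced}) (via (\ref{constraint-beta-2}) and (\ref{beta-n-elimination})) as an equation for $\alpha_1-\alpha_0$, imposes the normalization $\tilde\alpha_1=\tilde\alpha_0$, and solves for $X_0=X_0(\tilde\alpha_0,\epsilon)$ by the implicit function theorem using $A'(0)=0$, $A''(0)\neq 0$. The point is that with this choice of $X_0$, Proposition~\ref{prop-symmetry} guarantees the full orbit is reversible, so the right--hand side of the perturbed scalar equation (\ref{nonlinear-system-alpha}) \emph{does} satisfy $f_n=f_{1-n}$, and Proposition~\ref{proposition-inversion} applies directly at every step of the iteration. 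The fixed--point problem is then genuinely posed in the reversible subspace, with $X_0$ carried along as a smooth function of $\tilde\alpha_0$. Your non--degeneracy condition $\partial_{X_0}g(0;0)=-\mu\nu\neq 0$ is equivalent to the paper's $A''(0)\neq 0$, so the final implicit--function step is the same in content; what needs adjusting is feeding the constraint in \emph{before} the contraction so that the inversion is legitimate.
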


\begin{proof}
First, we rewrite the two-dimensional discrete map (\ref{nonlinear-system-4}) as
the scalar second-order difference equation. This equivalent formulation is convenient
for persistence analysis near the approximated solution (\ref{approximation-soliton}). Expressing
\begin{equation}
\label{beta-n-elimination}
\epsilon (L + \pi/2) \beta_n =
\frac{\alpha_{n+1} - \alpha_n - \frac{1}{2} \epsilon^2 (L^2 + L \pi + \pi^2) (1 - 2\alpha_n^2) \alpha_n + \mathcal{O}(\epsilon^4)}{1
+ \epsilon^2 \frac{2 L^3 + 3 L^2 \pi + 6 L \pi^2 + \pi^3}{12 ( L + \pi/2)} (1 - 6 \alpha_n^2) + \mathcal{O}(\epsilon^4)}
\end{equation}
from the first equation of system (\ref{nonlinear-system-4}), we close the second equation
of system (\ref{nonlinear-system-4}) in the form
\begin{equation}
\label{nonlinear-system-7}
-\frac{\alpha_{n+1}-2 \alpha_n + \alpha_{n-1}}{\epsilon^2} + \nu^2 (1 - 2 \alpha_n^2) \alpha_n =
F(\alpha_{n+1},\alpha_n,\alpha_{n-1}) + \epsilon^2 R(\alpha_{n+1},\alpha_n,\alpha_{n-1},\epsilon),
\end{equation}
where
{\small
\begin{eqnarray*}
F(\alpha_{n+1},\alpha_n,\alpha_{n-1}) & := & \frac{1}{2} (L^2 + 4 L \pi + \pi^2)
\left( 1 - 2\alpha_n^2 - 2 \alpha_n \alpha_{n-1} - 2 \alpha_{n-1}^2 \right) (\alpha_n - \alpha_{n-1})  \\
& \phantom{t} & -\frac{2 L^3 + 3 L^2 \pi + 6 L \pi^2 + \pi^3}{12 ( L + \pi/2)} (1 - 6 \alpha_n^2) (\alpha_{n+1} - \alpha_{n}) \\
& \phantom{t} & -\frac{8 L^3 + 24 L^2 \pi + 6 L \pi^2 + \pi^3}{24 ( L + \pi/2)} ( 1 - 6 \alpha_{n-1}^2) (\alpha_n - \alpha_{n-1}).
\end{eqnarray*}
}The remainder term $R$ is a smooth function of $(\alpha_{n+1}\alpha_n,\alpha_{n-1})$ and $\epsilon$, which remains
bounded by an $\epsilon$-independent constant as $\epsilon \to 0$ if $\| \alpha \|_{\ell^2}$ is bounded
by an $\epsilon$-independent constant.

Next, we ensure that the parameter $X_0$ can be uniquely chosen to satisfy the reversibility symmetry (\ref{symmetry-1}).
By Proposition \ref{prop-symmetry}, the symmetry (\ref{symmetry-1}) is satisfied if and only if the parameters $(a_0,b_0)$ satisfies
the constraint (\ref{symmetry-constraint-1}). By virtue of the scaling (\ref{scaling-map}), the reversibility constraint is
written in the form (\ref{symmetry-constraint-1-reduced}).
By Lemma \ref{proposition-expansion}, we rewrite the constraint (\ref{symmetry-constraint-1-reduced})
in the perturbed form
$$
\beta_0 + \frac{\epsilon L}{2} (1 - 2 \alpha_0^2) \alpha_0 + \frac{\epsilon^2 L^2}{8} (1 - 6 \alpha_0^2) \beta_0 + \mathcal{O}(\epsilon^3) = 0,
$$
from which we obtain
\begin{equation}
\label{constraint-beta-2}
\beta_0 = -\frac{\epsilon L}{2} (1 - 2 \alpha_0^2) \alpha_0 + \mathcal{O}(\epsilon^3).
\end{equation}
Using (\ref{beta-n-elimination}), we rewrite this constraint in the form
\begin{equation}
\label{constraint-beta-0}
\alpha_1 - \alpha_0 = \frac{\pi (L + 2\pi) \epsilon^2}{4} (1 - 2 \alpha_0^2) \alpha_0 + \epsilon^4 G(\alpha_0,\epsilon),
\end{equation}
where $G$ is a smooth function of $\alpha_0$ and $\epsilon$, which remains
bounded by an $\epsilon$-independent constant as $\epsilon \to 0$ if
$\alpha_0$ is bounded by an $\epsilon$-independent constant.

Substituting the decomposition (\ref{perturbed form}) into the constraint (\ref{constraint-beta-0}), we obtain
\begin{eqnarray}
\label{constraint-B}
\tilde{\alpha}_1 - \tilde{\alpha}_0 + A(X_0 + \epsilon/2) - A(X_0-\epsilon/2) =
\frac{\pi (L + 2\pi) \epsilon^2}{4} (1 - 2 \alpha_0^2) \alpha_0 + \epsilon^4 G(\alpha_0,\epsilon),
\end{eqnarray}
where $\alpha_0 = A(X_0-\epsilon/2) + \tilde{\alpha}_0$.
For uniqueness of the decomposition (\ref{perturbed form}), we supply the constraint
\begin{equation}
\label{constraint-tilde}
\tilde{\alpha}_1 = \tilde{\alpha}_0.
\end{equation}
Since $A'(0) = 0$ and $A''(0) \neq 0$, we can apply the implicit function theorem to solve
the implicit equation (\ref{constraint-B}) for $X_0$ in terms of $\tilde{\alpha}_0$
for $\epsilon > 0$ sufficiently small. The value of $X_0 = \mathcal{O}(\epsilon)$
is uniquely determined from the implicit equation
for every $\tilde{\alpha} \in \ell^2(\mathbb{Z})$ satisfying a priori bound
\begin{equation}
\label{apriori-assumption}
\| \tilde{\alpha} \|_{\ell^2} \leq C,
\end{equation}
where $C > 0$ is $\epsilon$-independent.

Next, we proceed with the perturbed solution of the discrete map (\ref{nonlinear-system-7}).
By substituting the decomposition (\ref{perturbed form}) and using the differential equation (\ref{nonlinear-system-6}),
we rewrite the second-order difference equation in the equivalent form
\begin{eqnarray}
\nonumber
& \phantom{t} & -\frac{\tilde{\alpha}_{n+1}-2 \tilde{\alpha}_n + \tilde{\alpha}_{n-1}}{\epsilon^2} + \nu^2 (1 - 6 A^2(\epsilon n-\epsilon/2))
\tilde{\alpha}_n \\
\nonumber
& = & H_n(\epsilon) + 6 \nu^2 G_n(\epsilon) \tilde{\alpha}_n
+ 6 \nu^2 A(\epsilon n -\epsilon/2 + X_0) \tilde{\alpha}_n^2 + 2 \nu^2 \tilde{\alpha}_n^3 \\
& \phantom{t} & + F(\alpha_{n+1},\alpha_n,\alpha_{n-1}) + \epsilon^2 R(\alpha_{n+1},\alpha_n,\alpha_{n-1},\epsilon),
\label{nonlinear-system-alpha}
\end{eqnarray}
where
$$
H_n(\epsilon) := \frac{A(\epsilon n + \epsilon/2 + X_0) - 2 A(\epsilon n -\epsilon/2 + X_0) + A(\epsilon n - 3\epsilon/2 + X_0)}{\epsilon^2}
- A''(\epsilon n -\epsilon/2 + X_0).
$$
and
$$
G_n(\epsilon) := A^2(\epsilon n -\epsilon/2 + X_0) - A^2(\epsilon n - \epsilon/2).
$$
Since $A(X) \in C^{\infty}(\mathbb{R})$ and $X_0 = \mathcal{O}(\epsilon)$,
the terms $H(\epsilon)$ and $G(\epsilon)$ satisfy the estimates
\begin{equation}
\label{bounds-1}
\| H(\epsilon) \|_{\ell^2} \leq C \epsilon^2, \quad \| G(\epsilon) \|_{\ell^2} \leq C \epsilon,
\end{equation}
where the positive constant $C$ is $\epsilon$-independent for every $\epsilon > 0$ sufficiently small.

On the other hand, both functions $F(\alpha_{n+1},\alpha_n,\alpha_{n-1})$
and $R(\alpha_{n+1},\alpha_n,\alpha_{n-1},\epsilon)$ in (\ref{nonlinear-system-alpha})
are $C^{\infty}$ in terms of $(\alpha_{n+1},\alpha_n,\alpha_{n-1})$ and $\epsilon$.
It follows from the explicit expression for $F$ that
\begin{equation}
\label{bounds-2}
\| F(A(\epsilon \cdot + \epsilon/2 + X_0),A(\epsilon \cdot - \epsilon/2 + X_0),A(\epsilon \cdot - 3\epsilon/2 + X_0)) \|_{\ell^2} \leq C \epsilon,
\end{equation}
where $C > 0$ is $\epsilon$-independent. Therefore, the inhomogeneous terms
of the perturbed system (\ref{nonlinear-system-alpha}) is bounded in a ball in $\ell^2(\mathbb{Z})$
of the size $\mathcal{O}(\epsilon)$. Since $\epsilon > 0$ is sufficiently small,
vectors in this ball satisfy the a priori assumption (\ref{apriori-assumption}) used earlier.

By using the bounds (\ref{bound-on-inverse}) of Proposition \ref{proposition-inversion},
we look for solutions of the persistence problem (\ref{nonlinear-system-alpha})
satisfying the bounds
\begin{equation}
\label{bound-on-inverse-alpha}
\epsilon^{-1} \left\| \sigma_+ \tilde{\alpha} - \tilde{\alpha} \right\|_{\ell^2}  \leq C \epsilon, \quad
\| \tilde{\alpha} \|_{\ell^2} \leq C \epsilon,
\end{equation}
where $C > 0$ is $\epsilon$-independent. Linearization of $F(\alpha_{n+1},\alpha_n,\alpha_{n-1})$
at $\alpha_n = A(\epsilon n -\epsilon/2 + X_0)$ yields $\mathcal{O}(\epsilon)$ perturbations to the linearized difference operator
in the left-hand side of (\ref{nonlinear-system-alpha}) acting on
$\epsilon^{-1} (\sigma_+ \tilde{\alpha} - \tilde{\alpha})$ and $\tilde{\alpha}$ in $\ell^2(\mathbb{Z})$.

We are now in position to invert the linearized difference operator in the left-hand-side of (\ref{nonlinear-system-alpha})
and to apply the fixed-point iterations for solutions of the persistence problem (\ref{nonlinear-system-alpha})
satisfying the estimates (\ref{bound-on-inverse-alpha}).
Indeed, the solution $\tilde{\alpha}$ is supposed to satisfy the reversibility constraint (\ref{constraint-tilde}),
whereas the right-hand side satisfies the reversibility condition used in Proposition \ref{proposition-inversion},
thanks to our choice of parameter $X_0$ from (\ref{constraint-B}). Equations (\ref{constraint-B}) and (\ref{constraint-tilde})
together define uniquely $X_0$ and satisfy the reversibility constraint (\ref{symmetry-constraint-1-reduced}).

By Proposition \ref{proposition-inversion}, the linearized difference operator in the left-hand-side of (\ref{nonlinear-system-alpha})
is invertible with the bound (\ref{bound-on-inverse}) for sufficiently small $\epsilon > 0$,
so that the fixed-point iterations converge in a ball in $\ell^2(\mathbb{Z})$
of the size $\mathcal{O}(\epsilon)$.
By the implicit function theorem, thanks to the estimates (\ref{bounds-1}) and (\ref{bounds-2}),
we obtain a unique solution $\tilde{\alpha} \in \ell^2(\mathbb{Z})$ to the perturbed system (\ref{nonlinear-system-alpha})
satisfying the reversibility constraint (\ref{constraint-tilde}) and the bounds (\ref{bound-on-inverse-alpha}).
Combining this result with the unique choice for $X_0 = \mathcal{O}(\epsilon)$
from (\ref{constraint-B}) and $\beta \in \ell^2(\mathbb{Z})$ from (\ref{beta-n-elimination}),
we obtain bound (\ref{estimate-final}). The statement of the lemma is proved.
\end{proof}

\begin{remark}
Lemma \ref{proposition-persistence} can be extended to solutions of the discrete map
(\ref{nonlinear-system-4}) satisfying the reversibility symmetry
(\ref{symmetry-2}). In this case, using the scaling (\ref{scaling-map}), we rewrite
the reversibility constraint (\ref{symmetry-constraint-2}) in the form:
\begin{equation}
\label{symmetry-constraint-2-reduced}
\partial_x \psi(\pi/2;\epsilon \gamma_0,\epsilon^2 \delta_0,\epsilon) = 0.
\end{equation}
After the straightforward computations involving system (\ref{gamma-delta-in-terms-of-alpha-beta}),
the constraint (\ref{symmetry-constraint-2-reduced}) can be expressed for the variables $(\alpha_0,\beta_0)$
in the perturbed form:
\begin{equation}
\label{constraint-beta-1}
\beta_0 = -\epsilon (L + \pi) (1 - 2 \alpha_0^2) \alpha_0 + \mathcal{O}(\epsilon^3),
\end{equation}
which is not so different from (\ref{constraint-beta-2}). As a result,
the value of $X_0$ is chosen differently from the constraint
(\ref{constraint-beta-1}), yet, the construction for $X_0$ and $(\tilde{\alpha},\tilde{\beta}) \in \ell^2(\mathbb{Z})$ is unique.
\label{remark-other-symmetry}
\end{remark}

\begin{remark}
It follows from the representation (\ref{correspondence-phi}), Proposition \ref{prop-symmetry},
and Lemma \ref{proposition-expansion} that the results of Lemma \ref{proposition-persistence}
and Remark \ref{remark-other-symmetry} imply the statement of Theorem \ref{theorem-main}
with bound (\ref{branch-bounds}) and symmetries (\ref{branch-1}) and (\ref{branch-2})
on the bound states $\phi$ of the stationary NLS equation (\ref{statNLS}) with $\Lambda = -\epsilon^2$.
Property (i) holds by the construction. However, properties (ii) and (iii) have not yet been proved.
In particular, we cannot state positivity and exponential decay of the sequence $\{ \alpha_n, \beta_n \}_{n \in \mathbb{Z}}$,
which makes it impossible to claim the same properties for the bound state $\phi$.
\end{remark}

\begin{remark}
The result of Lemma \ref{proposition-persistence} corresponds to the reductions
of the NLS equation (\ref{NLS}) on the periodic graph $\Gamma$
to the cubic NLS equation (\ref{nls0}) established in \cite{GilgPS}.
Equation (\ref{nonlinear-system-6}) corresponds to the stationary NLS equation
obtained from the cubic NLS equation (\ref{nls0}) with $\beta = \nu^{-2}$
and $\gamma = 2$ for the steady solutions in the form $\Psi(X,T) = A(X) e^{-i T}$.
Indeed, justification of stationary versions of the NLS
equation follows closely to the justification of the time-dependent equations
and relies on the generalized Lyapunov--Schmidt reduction method \cite{Peli}.
Bloch wave functions are used for derivation and justification of these equations,
whereas the coefficient $\beta = \nu^{-2}$ corresponds to the asymptotic computation (\ref{lowest-band})
obtained from the linear analysis of the lowest spectral band $\sigma_1$
in the spectrum of $-\partial_x^2 : \mathcal{D} \to L^2(\Gamma)$.
\end{remark}

\section{Properties of the bound states bifurcating on $\Gamma$}

We prove here positivity and exponential decay of the sequence $\{ \alpha_n,\beta_n \}_{n \in \mathbb{Z}}$
for the reversible solutions of the two-dimensional discrete map (\ref{nonlinear-system-4}).
To do so, we use the theory of invariant manifolds for discrete maps. Applications
of this theory to construct two distinct sets of so-called on-site and inter-site homoclinic orbits
in the discrete NLS equation can be found in \cite{QinXiao}. A different but spiritually similar
technique for approximations of homoclinic orbits in discrete maps via normal forms is described in
\cite{James}.

We rewrite the two-dimensional discrete map (\ref{nonlinear-system-4}) in the abstract form
\begin{equation}
\label{nonlinear-map}
\left\{ \begin{array}{l} \alpha_{n+1} = \alpha_n + f_{\epsilon}(\alpha_n,\beta_n), \\
\beta_{n+1} = \beta_n + g_{\epsilon}(\alpha_n,\beta_n), \end{array} \right.
\end{equation}
where $(f_{\epsilon},g_{\epsilon})$ are smooth functions of $(\alpha_n,\beta_n)$ and $\epsilon$,
which are available in the form of the perturbative expansion:
\begin{eqnarray}
\label{explicit-vector-field}
\left\{ \begin{array}{l}
f_{\epsilon}(\alpha_n,\beta_n) := \epsilon (L + \pi/2) \beta_n + \frac{1}{2} \epsilon^2 (L^2 + \pi L + \pi^2) (1 - 2 \alpha_n^2) \alpha_n + \mathcal{O}(\epsilon^3), \\
g_{\epsilon}(\alpha_n,\beta_n) := \epsilon (L + 2 \pi) (1 - 2 \alpha_n^2) \alpha_n
+ \frac{1}{4} \epsilon^2 (2 L^2 + 4 L \pi + \pi^2) (1 - 6 \alpha_n^2) \beta_n + \mathcal{O}(\epsilon^3).  \end{array} \right.
\end{eqnarray}
We shall prove existence of homoclinic reversible orbits in the discrete map (\ref{nonlinear-map}) for small $\epsilon \neq 0$,
which have the required properties of positivity and exponential decay.

\begin{lemma}
\label{proposition-properties-map}
There exist $\epsilon_0 > 0$ and $C_0 > 0$ such that for every $\epsilon \in (0,\epsilon_0)$, there exists
two distinct homoclinic orbits to the discrete map (\ref{nonlinear-map}) such that $(\alpha_0,\beta_0)$
satisfy either constraint (\ref{symmetry-constraint-1-reduced}) or (\ref{symmetry-constraint-2-reduced}).
Moreover, for each homoclinic orbit, we have
\begin{equation}
\label{branch-bounds-prop}
\| \alpha \|_{\ell^2} + \| \beta \|_{\ell^2} \leq C_0
\end{equation}
and
\begin{itemize}
\item[(a)] $\alpha_n > 0$ for every $n \in \mathbb{Z}$,
\item[(b)] $\alpha_n \to 0$ as $|n| \to \infty$ exponentially fast,
\item[(c)] there is $N \geq 0$ such that
$\{ \alpha_n \}_{n \in \mathbb{Z}}$ is monotonically increasing for $n \leq -N$ and decreasing for $n \geq N$.
\end{itemize}
The sequence $\{ (\alpha_n,\beta_n) \}_{n \in \mathbb{Z}}$ for the two homoclinic orbits is smooth in $\epsilon$.
\end{lemma}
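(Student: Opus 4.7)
The plan is to take existence directly from Lemma \ref{proposition-persistence} (together with Remark \ref{remark-other-symmetry} for the second symmetry), and then to establish properties (a)--(c) by analyzing the smooth planar map (\ref{nonlinear-map}) near the hyperbolic fixed point $(\alpha,\beta)=(0,0)$ via the geometric theory of stable and unstable manifolds in two-dimensional diffeomorphisms.

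First I would read off the Jacobian of (\ref{nonlinear-map}) at the origin from the explicit expansion (\ref{explicit-vector-field}):
\begin{equation*}
DF_{\epsilon}(0,0) \,=\, I+\epsilon\begin{pmatrix} 0 & L+\pi/2 \\ L+2\pi & 0 \end{pmatrix}+\mathcal{O}(\epsilon^{2}),
\end{equation*}
whose Floquet multipliers expand as $\mu_{\pm}(\epsilon)=1\pm\epsilon\nu+\mathcal{O}(\epsilon^{2})$ with $\nu^{2}=(L+\pi/2)(L+2\pi)$, so $(0,0)$ is hyperbolic for every sufficiently small $\epsilon>0$. The off-diagonal entries of the leading matrix are positive, so both eigenvectors $v_{\pm}$ lie in the open positive cone of $\mathbb{R}^{2}$. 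By the stable--unstable manifold theorem for smooth diffeomorphisms, there exist one-dimensional $C^{\infty}$ invariant manifolds $W^{s}_{\epsilon}$ and $W^{u}_{\epsilon}$ through the origin, tangent to $v_{-}$ and $v_{+}$ respectively, depending smoothly on $\epsilon$. Because the reversible orbit produced by Lemma \ref{proposition-persistence} is forward- and backward-decaying, it necessarily lies in $W^{s}_{\epsilon}\cap W^{u}_{\epsilon}$, and the bound (\ref{branch-bounds-prop}) is inherited from the $\mathcal{O}(1)$ envelope plus the $\mathcal{O}(\epsilon)$ estimate on $\tilde\alpha,\tilde\beta$ furnished by Lemma \ref{proposition-persistence}.

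Next, parameterizing $W^{s}_{\epsilon}$ locally as a graph over the $v_{-}$-direction, the orbit admits the asymptotic representation $(\alpha_{n},\beta_{n})=c\,\mu_{-}^{\,n}v_{-}+\mathcal{O}(\mu_{-}^{\,2n})$ for $n\ge N$, where $c$ inherits its positive sign from the leading-order envelope $A(X_{0}+\epsilon n-\epsilon/2)=\mathrm{sech}(\nu(X_{0}+\epsilon n-\epsilon/2))$. This single representation yields simultaneously (a) $\alpha_{n}>0$, (b) exponential decay at rate $|\!\log\mu_{-}|=\epsilon\nu+\mathcal{O}(\epsilon^{2})$, and (c) the monotonicity $\alpha_{n+1}/\alpha_{n}\to\mu_{-}\in(0,1)$ for $n\ge N$; the reversibility symmetry (\ref{symmetry-1}) (or (\ref{symmetry-2})) then transfers these conclusions to $n\le -N$. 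Positivity on the compact middle range $|n|<N$ is then obtained by choosing $\epsilon_{0}$ so small that the pointwise corrector is strictly smaller than the envelope lower bound $\min_{|n|\le N} A(X_{0}+\epsilon n-\epsilon/2)\ge c_{0}>0$. Smoothness of the orbit in $\epsilon$ follows from smooth dependence of $W^{s/u}_{\epsilon}$ on $\epsilon$ and from the implicit function theorem already invoked in the proof of Lemma \ref{proposition-persistence}.

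The main obstacle is bridging the near-origin linear regime, where dynamics is dictated by $\mu_{\pm}$, with the middle envelope regime, where the solution is dominated by the sech profile: the $\ell^{2}$-control of the corrector $\tilde\alpha$ from Lemma \ref{proposition-persistence} does not by itself pin down the sign of $\alpha_{n}$ pointwise. The resolution is to combine the two bounds in (\ref{bound-on-inverse-alpha}) via the discrete Gagliardo--Nirenberg interpolation $\|\tilde\alpha\|_{\ell^{\infty}}^{2}\le C\,\|\tilde\alpha\|_{\ell^{2}}\,\epsilon^{-1}\|\sigma_{+}\tilde\alpha-\tilde\alpha\|_{\ell^{2}}$ to upgrade the corrector control to $\|\tilde\alpha\|_{\ell^{\infty}}=\mathcal{O}(\epsilon^{1/2})$, which is uniformly smaller than the envelope on any fixed compact window. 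The matching of this pointwise bound with the manifold graph representation near the fixed point then closes the argument uniformly in $n\in\mathbb{Z}$.
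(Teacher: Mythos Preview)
Your route is sound but genuinely different from the paper's. Rather than importing existence from Lemma~\ref{proposition-persistence}, the paper constructs the homoclinic orbit \emph{directly}: it follows the one-dimensional unstable curve out of $(0,0)$ and tracks the signs of $f_{\epsilon}$ and $g_{\epsilon}$ explicitly, showing that $\alpha_{n}$ increases as long as $\beta_{n}\geq C_{1}\epsilon\alpha_{n}$, that $\beta_{n}$ begins to decrease once $\alpha_{n}$ passes $1/\sqrt{2}$, and that after $\mathcal{O}(\epsilon^{-1})$ steps the orbit crosses the reversibility curve $\beta=\mathcal{N}_{\epsilon}(\alpha)$; continuous dependence on initial data together with Proposition~\ref{prop-symmetry} then closes the orbit. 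Positivity and monotonicity are therefore outputs of the construction rather than properties verified a~posteriori, and the paper obtains the sharper conclusion (recorded in the remark following the lemma) that $N$ in~(c) can be taken $\epsilon$-independent, whereas your stable-manifold asymptotics only yield $N=\mathcal{O}(\epsilon^{-1})$. Your approach is more modular and avoids the phase-plane bookkeeping, at the cost of this sharper monotonicity. Two small slips to fix: the stable eigenvector $v_{-}\propto(1,-\mu)$ is \emph{not} in the open positive cone (only its $\alpha$-component is positive, which is all you actually use); and the discrete Gagliardo--Nirenberg inequality should read $\|\tilde\alpha\|_{\ell^{\infty}}^{2}\leq C\,\|\tilde\alpha\|_{\ell^{2}}\,\|\sigma_{+}\tilde\alpha-\tilde\alpha\|_{\ell^{2}}$ without the extra $\epsilon^{-1}$, which combined with (\ref{bound-on-inverse-alpha}) in fact gives $\|\tilde\alpha\|_{\ell^{\infty}}=\mathcal{O}(\epsilon^{3/2})$, stronger than your stated $\mathcal{O}(\epsilon^{1/2})$.
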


\begin{proof}
The point $(0,0)$ is a fixed point of the discrete map (\ref{nonlinear-map}) because $f_{\epsilon}(0,0) = g_{\epsilon}(0,0) = 0$ follows by
existence and uniqueness of zero solutions of the initial-value problems (\ref{initial-value-1}) and (\ref{initial-value-2}),
see Proposition \ref{prop-ODE}. The Jacobian matrix of the discrete map (\ref{nonlinear-map}) at $(0,0)$ is given by
\begin{equation}
A = \left[ \begin{array}{cc} 1 + \mathcal{O}(\epsilon^2) & \epsilon (L + \pi/2) + \mathcal{O}(\epsilon^3) \\
\epsilon (L + 2\pi) + \mathcal{O}(\epsilon^3) & 1 + \mathcal{O}(\epsilon^2) \end{array} \right].
\end{equation}
The two eigenvalues of $A$ are $\lambda_{\pm} = 1 \pm \epsilon \nu + \mathcal{O}(\epsilon^2)$,
where $\nu^2 := (L+\pi/2)(L+2\pi)$, therefore, the fixed point is hyperbolic for every $\epsilon \neq 0$.
If there exists a homoclinic orbit $\{ \alpha_n,\beta_n \}_{n \in \mathbb{Z}}$ to the hyperbolic fixed point
$(0,0)$, then $|\alpha_n| + |\beta_n| \to 0$ as $|n| \to \infty$ exponentially fast, by the stable and unstable
curve theory for hyperbolic fixed points.

Therefore, we only need to prove existence of the two distinct positive homoclinic orbits to the discrete map (\ref{nonlinear-map}),
which corresponds to the two reversibility constraints (\ref{symmetry-constraint-1-reduced}) and (\ref{symmetry-constraint-2-reduced}),
see Proposition \ref{prop-symmetry}.
Without loss of generality, we consider the symmetry
constraint (\ref{symmetry-constraint-1-reduced}) given by the asymptotic expansion (\ref{constraint-beta-2}).
The symmetry constraint represents a curve in the $(\alpha,\beta)$ plane given asymptotically by
\begin{equation}
\label{symmetry-curve}
\beta = \mathcal{N}_{\epsilon}(\alpha) := -\frac{\epsilon L}{2} (1 - 2 \alpha^2) \alpha + \mathcal{O}(\epsilon^3) \quad \mbox{\rm as} \quad \epsilon \to 0.
\end{equation}
By the symmetry, it is sufficient to construct the one-dimensional unstable curve for the discrete
map (\ref{nonlinear-map}) and to prove that this curve intersects the curve (\ref{symmetry-curve}).
By discrete group of translations and continuous dependence on the initial conditions,
the intersection point can be translated to the location $n = 0$ in the sequence
$\{ \alpha_n,\beta_n \}_{n \in \mathbb{Z}}$ and hence satisfy the symmetry constraint (\ref{symmetry-constraint-1-reduced}).

For the unstable eigenvalue $\lambda_+ = 1 + \epsilon \nu + \mathcal{O}(\epsilon^2)$, the corresponding
eigenvector of $A$ defines a straight line on the $(\alpha,\beta)$ plane,
\begin{equation}
\label{eigenvector-A}
\beta = \mathcal{U}_{\epsilon}(\alpha) := \left[ \mu + \mathcal{O}(\epsilon) \right] \alpha, \quad \mbox{\rm as} \quad \epsilon \to 0,
\end{equation}
where $\mu^2 := \frac{L+2\pi}{L+\pi/2}$. The straight line (\ref{eigenvector-A})
is located in the first quadrant of positive $\alpha$ and $\beta$. By the invariant curve theory, there exists a one-dimensional unstable
curve in the $(\alpha,\beta)$ plane, which is tangent to the line defined by (\ref{eigenvector-A}).
Therefore, there exists $N_1 \in \mathbb{Z}$ such that $\{ \alpha_n,\beta_n\}_{n = -\infty}^{n=N_1}$ are monotonically increasing
and $\alpha_n, \beta_n > 0$ for every $n \leq N_1$.

Figure \ref{fig:phasePlane} shows the plane $(\alpha,\beta)$ together with the symmetry curve (\ref{symmetry-curve}) (red dash-dotted line) and
the straight line (\ref{eigenvector-A}) (green dashed line). The blue dots show the monotonically increasing sequence
$\{ \alpha_n,\beta_n\}_{n = -\infty}^{n=N_1}$ according to the approximate solution (\ref{approximation-soliton}).

By the explicit form of $f_{\epsilon}$ in (\ref{explicit-vector-field}),
there is a positive $\epsilon$-independent constant $C_1$ such that
$f_{\epsilon}(\alpha_n,\beta_n) > 0$ for $\epsilon > 0$ sufficiently small,
as long as $\beta \geq C_1 \epsilon \alpha$. Therefore, the sequence $\{ \alpha_n \}$
remains monotonically increasing as long as $\beta \geq C_1 \epsilon \alpha$.

By the explicit form of $g_{\epsilon}$ in (\ref{explicit-vector-field}), there is a positive
$\epsilon$-independent constant $C_2$ such that
the sequence $\{ \beta_n \}$ is monotonically decreasing if $\alpha > \frac{1}{\sqrt{2}} - C_2 \epsilon \beta$.

Since
\begin{equation}
\label{extension}
\lambda_+^n = (1 + \epsilon \nu + \mathcal{O}(\epsilon^2))^n
= e^{n \epsilon \nu (1 + \mathcal{O}(\epsilon))}, \quad \mbox{\rm as} \quad \epsilon \to 0,
\end{equation}
it follows from (\ref{eigenvector-A}) and (\ref{extension}) that
there exists $N_1 = \mathcal{O}(\epsilon^{-1})$ such that
$\alpha_{N_1} > \frac{1}{\sqrt{2}} - C_2 \epsilon \beta_{N_1}$ and $\beta_{N_1} = \mathcal{O}(1)$ as $\epsilon \to 0$.
From the explicit forms of $f_{\epsilon}$ and $g_{\epsilon}$ discussed above,
it follows that there exists $N_2 > N_1$ such that
$\{ \alpha_n \}_{n = -\infty}^{n = N_2}$ is monotonically increasing and
$\{ \beta_n \}_{n = N_1}^{n = N_2}$ is monotonically decreasing until it reaches $\beta_{N_2} \leq C_1 \epsilon \alpha_{N_2}$.

Since the values of $\beta_n$ decrease from $\mathcal{O}(1)$ at $n = N_1$ to $\mathcal{O}(\epsilon)$ at $n = N_2$,
we have $N_2 - N_1 = \mathcal{O}(\epsilon^{-1})$ and $\alpha_{N_2} \geq C_3$ for some $C_3 > \frac{1}{\sqrt{2}}$.
If $\beta_{N_2} \leq \mathcal{N}_{\epsilon}(\alpha_{N_2})$, we are done, as the
existence of a homoclinic orbit follows from the discrete translational invariance,
the reversibility symmetry, and the continuous dependence of sequences from initial data.

If $\beta_{N_2} > \mathcal{N}_{\epsilon}(\alpha_{N_2})$,
then we continue iterations further. Since $\alpha_{N_2} \geq C_3 > \frac{1}{\sqrt{2}} - C_2 \epsilon \beta_{N_2}$
and $\beta_{N_2} \leq C_1 \epsilon \alpha_{N_2}$, there is an
$\epsilon$-independent positive integer $K$ such that $\{ \alpha_n \}_{n = N_2}^{n = N_2 + K}$ becomes monotonically
decreasing, whereas $\{ \beta_n \}_{n = N_2}^{n=N_2+K}$ continues to be monotonically decreasing
until it reaches $\beta_{N_2+K} \leq \mathcal{N}_{\epsilon}(\alpha_{N_2+K})$ for
$\alpha_{N_2+K} \geq C_4$ for some $C_4 \in \left(\frac{1}{\sqrt{2}},C_3\right)$.
Again, the existence of a homoclinic orbit follows from the discrete translational invariance,
the reversibility symmetry, and the continuous dependence of sequences from initial data.

A similar construction holds for the symmetry constraint (\ref{symmetry-constraint-2-reduced}),
which is given by the asymptotic expansion (\ref{constraint-beta-1}).
Smoothness in $\epsilon$ is obtained by smoothness of the vector field and all asymptotic expansions in $\epsilon$.
\end{proof}

\begin{figure}[htbp]
   \centering
   \includegraphics[width=4in]{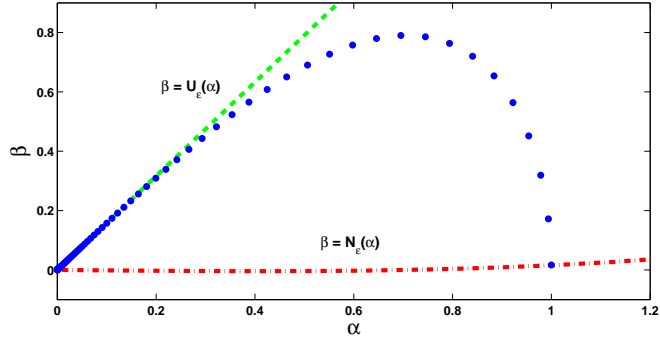}
\caption{The plane $(\alpha,\beta)$ for $L = \pi/2$ and $\epsilon = 0.02$, where
the blue dots denote a sequence $\{ \alpha_n,\beta_n \}_{n \in \mathbb{Z}}$ given approximately by
 the solution (\ref{approximation-soliton}), the green dashed line shows the straight line $\beta = \mathcal{U}_{\epsilon}(\alpha)$,
 and the red dash-dotted line shows the symmetry curve $\beta = \mathcal{N}_{\epsilon}(\alpha)$.}
\label{fig:phasePlane}
\end{figure}

\begin{remark}
If $K > 0$ in the proof of Lemma \ref{proposition-properties-map},
the sequence $\{ \alpha_n \}_{n \in \mathbb{Z}}$ constructed for two distinct homoclinic orbits in
Lemma \ref{proposition-properties-map} is not monotonically increasing for $n < 0$ and
decreasing for $n > 0$. Indeed, at the symmetry curve (\ref{symmetry-curve}), we have
$$
f_{\epsilon}(\alpha,\mathcal{N}_{\epsilon}(\alpha)) = \frac{\pi (L + 2 \pi) \epsilon^2}{4} (1 - 2 \alpha^2) \alpha
+ \mathcal{O}(\epsilon^4),
$$
therefore, $f_{\epsilon}(\alpha,\mathcal{N}_{\epsilon}(\alpha)) < 0$ for $\alpha > \frac{1}{\sqrt{2}}$,
if $\epsilon$ is sufficiently small. Thus, the sequence $\{ \alpha_n \}_{n \in \mathbb{Z}}$ is only proved
to be monotonically increasing for $n < -N$ and decreasing for $n > N$, where the number $N \geq 0$ is $\epsilon$-independent
for sufficiently small $\epsilon$.
\end{remark}

\begin{remark}
Since we are not able to prove that $\alpha_0 = 1 + \mathcal{O}(\epsilon)$ for the two distinct homoclinic
orbits constructed in Lemma \ref{proposition-properties-map}, we are not able to claim immediately that these homoclinic
orbits are the same as the ones in Lemma \ref{proposition-persistence}. Nevertheless, because of convergence
of finite differences to derivatives in the two-dimensional discrete map (\ref{nonlinear-map}) as $\epsilon \to 0$
and smoothness of the sequences $\{ (\alpha_n,\beta_n) \}_{n \in \mathbb{Z}}$ with respect to $\epsilon$ in
Lemma \ref{proposition-properties-map},
the two distinct homoclinic orbits converge as $\epsilon \to 0$ to the leading-order approximation (\ref{approximation-soliton}),
which is uniquely continued in Lemma \ref{proposition-persistence}. Therefore, in the end,
we confirm that these two distinct solutions of Lemmas \ref{proposition-persistence} and \ref{proposition-properties-map} are
the same.
\end{remark}

We shall now transfer properties of Lemma \ref{proposition-properties-map} to the bound states
of the stationary NLS equation (\ref{statNLS-eps}) for small $\epsilon \neq 0$.

\begin{corollary}
\label{corollary-main}
There exist $\epsilon_0 > 0$ and $C_0 > 0$ such that for every $\epsilon \in (0,\epsilon_0)$, there exists
two distinct bound states $\phi \in \mathcal{D} \subset L^2(\Gamma)$
to the stationary NLS equation (\ref{statNLS-eps}) satisfying
the constraints (\ref{branch-1}) and (\ref{branch-2}). Moreover, for each branch, we have
\begin{equation}
\label{branch-bounds-cor}
\| \phi \|_{H^2(\Gamma)} \leq C_0 \epsilon
\end{equation}
and
\begin{itemize}
\item[(i)] $\phi_{n,+}(x) = \phi_{n,-}(x)$ for every $x \in [nP+L,(n+1)P]$,
\item[(ii)] $\phi(x) > 0$ for every $x \in \Gamma$,
\item[(iii)] $\phi(x) \to 0$ as $|x| \to \infty$ exponentially fast.
\end{itemize}
The function $\phi$ for the two bound states is smooth in $\epsilon$.
\end{corollary}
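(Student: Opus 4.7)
The plan is to lift each of the two homoclinic orbits $\{(\alpha_n,\beta_n)\}_{n\in\Z}$ furnished by Lemma \ref{proposition-properties-map} into a bound state of (\ref{statNLS-eps}) via the reconstruction (\ref{correspondence-phi}). Reversing the scaling (\ref{scaling-map}), I would set $(a_n,b_n):=(\epsilon\alpha_n,\epsilon^2\beta_n)$, define $(c_n,d_n)$ through (\ref{nonlinear-system-1}), and then assemble
\[
\phi_{n,0}(x):=\psi(x-nP;a_n,b_n,\epsilon),\qquad \phi_{n,\pm}(x):=\psi(x-nP-L;c_n,d_n,\epsilon),
\]
declaring $\phi_{n,+}\equiv\phi_{n,-}$ so that the reduction (\ref{reduction-2}) holds by construction, which is property (i). Because the Kirchhoff--continuity system (\ref{keq1-red})--(\ref{keq4-red}) is equivalent, under the reconstruction (\ref{correspondence-phi}), to the discrete map (\ref{nonlinear-system-1})--(\ref{nonlinear-system-2}), the resulting $\phi$ lies in $\mathcal{D}$ and solves (\ref{statNLS-eps}). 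The two reversibility constraints (\ref{symmetry-constraint-1-reduced}) and (\ref{symmetry-constraint-2-reduced}) underlying Lemma \ref{proposition-properties-map}, combined with Proposition \ref{prop-symmetry}, then deliver the two symmetries (\ref{branch-1}) and (\ref{branch-2}).

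For the $H^2$-bound (\ref{branch-bounds-cor}), I would invoke Lemma \ref{proposition-expansion} together with smooth dependence of the ODE (\ref{initial-value}) on its initial data: since $\psi(x;0,0,\epsilon)\equiv 0$, one obtains an edge-wise estimate
\[
\|\phi_{n,0}\|_{H^2([nP,nP+L])}+\|\phi_{n,\pm}\|_{H^2([nP+L,(n+1)P])}\leq C(|a_n|+|b_n|)
\]
with $C$ independent of $n$ and $\epsilon$. Summing in $n$, using (\ref{scaling-map}) and the $\ell^2$-bound (\ref{branch-bounds-prop}) from Lemma \ref{proposition-properties-map}, gives $\|\phi\|_{H^2(\Gamma)}^2\leq C\epsilon^2(\|\alpha\|_{\ell^2}^2+\epsilon^2\|\beta\|_{\ell^2}^2)\leq C_0^2\epsilon^2$. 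Exponential decay (iii) follows from the same pointwise edge estimate $|\phi(x)|\leq C(|a_n|+|b_n|)$ for $x$ in the $n$-th cell together with the exponential decay of $\{\alpha_n,\beta_n\}$, which Lemma \ref{proposition-properties-map} inherits from stable/unstable manifold theory at the hyperbolic fixed point $(0,0)$. Smoothness of $\phi$ in $\epsilon$ is inherited from the smooth $\epsilon$-dependence of the discrete-map iterates (Lemma \ref{proposition-properties-map}) and of the flow $\psi(\cdot;a,b,\epsilon)$ in all its arguments (Proposition \ref{prop-ODE} and Lemma \ref{proposition-expansion}).

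The main obstacle is property (ii): positivity \emph{throughout} each edge, since Lemma \ref{proposition-properties-map}(a) only asserts $\alpha_n>0$ at integer sites. My plan is to split into two regimes. For the finitely many central indices $|n|\leq N$ provided by Lemma \ref{proposition-properties-map}(c), $\alpha_n$ is bounded below by an $\epsilon$-independent positive constant while $|\beta_n|$ is bounded above, so Lemma \ref{proposition-expansion} gives uniformly on each edge
\[
\psi(x;\epsilon\alpha_n,\epsilon^2\beta_n,\epsilon)=\epsilon\alpha_n+\mathcal{O}_{L^\infty}(\epsilon^2)>0
\]
once $\epsilon_0$ is chosen small enough. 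For the exponentially decaying tail $|n|>N$, the sequence $(a_n,b_n)$ lies near the stable/unstable manifold of the hyperbolic fixed point, so the solution of (\ref{initial-value}) with data $(a_n,b_n)$ is well approximated by the linearized flow $a_n\cosh(\epsilon x)+(b_n/\epsilon)\sinh(\epsilon x)$; the manifold tangency relation $\beta_n\approx\pm\mu\alpha_n$ combined with $\tanh(\epsilon P)=\mathcal{O}(\epsilon)\ll\mu$ ensures that this linear expression, and hence $\psi$ on the edge, is strictly positive. Combining the two regimes yields $\phi(x)>0$ for every $x\in\Gamma$, completing the proof of the corollary.
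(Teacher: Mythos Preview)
Your proposal is correct and follows essentially the same route as the paper: lift the two homoclinic orbits of Lemma~\ref{proposition-properties-map} to bound states via the reconstruction (\ref{correspondence-phi}) and the expansion of Lemma~\ref{proposition-expansion}, then read off (i)--(iii) and the $H^2$ bound from the properties of $\{(\alpha_n,\beta_n)\}$. Your treatment of positivity (ii) is in fact more explicit than the paper's, which simply asserts that (ii) follows from properties (a) and (b) of Lemma~\ref{proposition-properties-map} together with the fact that $\{\alpha_n\}$ and $\{\beta_n\}$ share the same exponential decay rate (equivalently, $|\beta_n|/\alpha_n$ is uniformly bounded) by invariant curve theory; your two-regime argument unpacks precisely this point.
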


\begin{proof}
The result follows from Lemma \ref{proposition-properties-map}, with the representation (\ref{correspondence-phi})
and the asymptotic expansions of Lemma \ref{proposition-expansion}. These asymptotic expansions can be rewritten
explicitly as follows:
\begin{equation*}
\phi_{n,0}(x) = \epsilon \left[ \alpha_n +
\epsilon \beta_n (x-nP) + \frac{1}{2} \epsilon^2 \alpha_n (1 - 2 \alpha_n^2) (x-nP)^2
+ \mathcal{O}_{L^{\infty}(nP,nP+L)}(\epsilon^3) \right]
\end{equation*}
and
\begin{equation*}
\phi_{n,+}(x) = \epsilon \left[ \gamma_n +
\epsilon \delta_n (x-nP-L) + \frac{1}{2} \epsilon^2 \gamma_n (1 - 2 \gamma_n^2) (x-nP-L)^2
+ \mathcal{O}_{L^{\infty}(nP+L,(n+1)P)}(\epsilon^3) \right],
\end{equation*}
where $\{ (\alpha_n,\beta_n) \}_{n \in \mathbb{Z}}$ is defined at
one of the two homoclinic orbits of Lemma \ref{proposition-properties-map},
whereas $\{ (\gamma_n,\delta_n) \}_{n \in \mathbb{Z}}$ are found from system (\ref{gamma-delta-in-terms-of-alpha-beta}).

Bound (\ref{branch-bounds-cor}) follows from this asymptotic representation and bound (\ref{branch-bounds-prop}).
Symmetry (i) follows by the construction of the reduction (\ref{reduction-2}). Positivity (ii) and exponential
decay (iii) follows from properties (a) and (b) of Lemma \ref{proposition-properties-map} provided
the rate of exponential decay of sequences $\{ \alpha_n \}_{n \in \mathbb{Z}}$ and $\{ \beta_n \}_{n \in \mathbb{Z}}$ coincides.
The latter fact follows from the invariant curve theory for two-dimensional discrete maps.
Smoothness of $\phi$ in $\epsilon$ follows from smoothness of the sequence
$\{ (\alpha_n,\beta_n) \}_{n \in \mathbb{Z}}$ in $\epsilon$, as stated in Lemma \ref{proposition-properties-map}.
\end{proof}

\begin{remark}
Theorem \ref{theorem-main} is a reformulation of the result of Corollary \ref{corollary-main}.
\end{remark}

\section{Discussions}

In order to illustrate the construction of two distinct bound states in Theorem \ref{theorem-main},
we compute numerical approximations of bound states of the stationary NLS equation
(\ref{statNLS-eps}) on the periodic graph $\Gamma$. The numerical method uses the first-order invariant
(\ref{first-order-invariant}) piecewise on the segments $\Gamma_{n,0}$ and $\Gamma_{n,+}$
of the periodic graph $\Gamma$. The starting point $(\phi,\phi') = (\phi_0,0)$
at the center of symmetry (either $x_0 = L/2$ or $x_0 = L + \pi/2$) is used as parameter
of the shooting method. The boundary conditions (\ref{keq1-red}) and (\ref{keq4-red})
are preserved at each breaking point of the graph in order to recompute
the value of the first-order invariant (\ref{first-order-invariant}) and
to continue the solution away from the points of symmetry. The parameter $\phi_0$
of the shooting method is adjusted to obtain a homoclinic orbit on the periodic graph $\Gamma$.
The two distinct families of the bound states are shown on Figures \ref{fig:state1} and \ref{fig:state2}
both on the plane $(x,\phi)$ and on the phase plane $(\phi,\phi')$.
The green dotted line shows the solitary wave of the stationary NLS equation
on the infinite line, which is available analytically as $\phi(x) = \epsilon {\rm sech}(\epsilon (x-x_0))$
for each of the bound state. Note that although the jumps in the derivative $\phi'$ look
large on the right panels of Figures \ref{fig:state1} and \ref{fig:state2},
the scaling is $\mathcal{O}(10^{-3})$ for the vertical axis.

\begin{figure}[htbp]
   \centering
   \includegraphics[width=3.in]{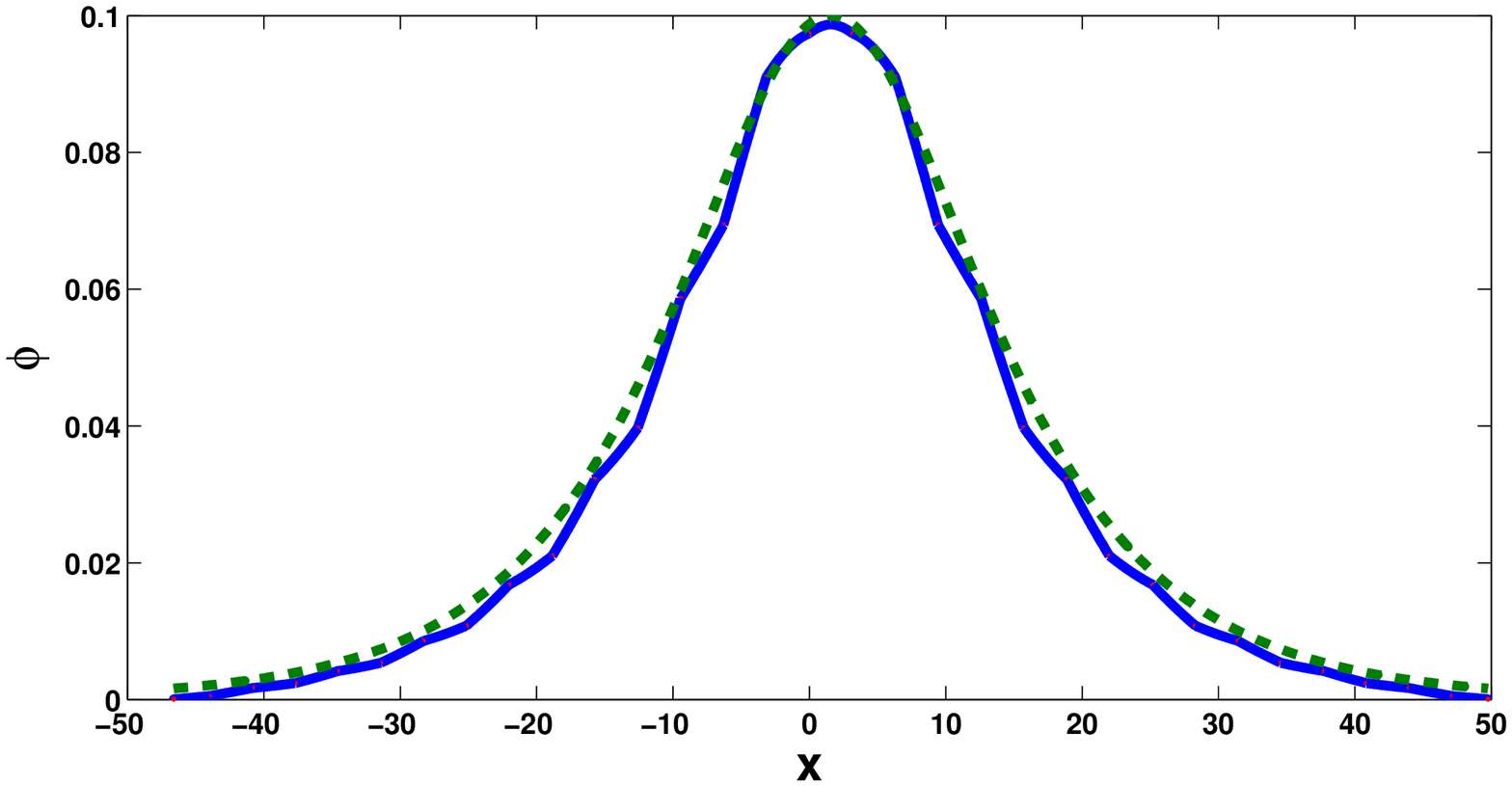}
   \includegraphics[width=3.in]{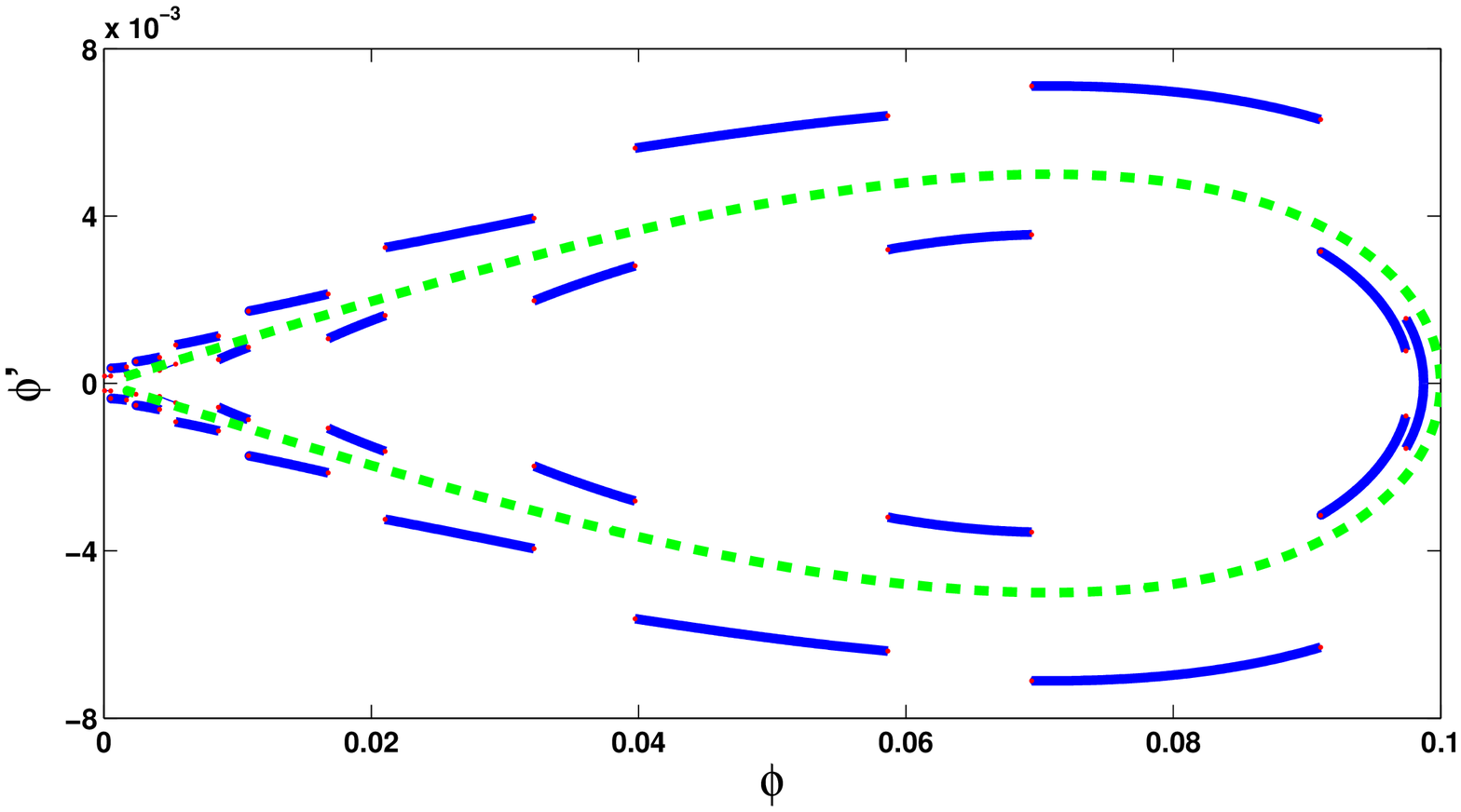}
\caption{Profile of the numerically generated bound state of the stationary NLS equation (\ref{statNLS-eps})
with symmetry (\ref{branch-1}) for $L = \pi$ and $\epsilon = 0.1$ on $(x,\phi)$ plane (left) and on $(\phi,\phi')$ plane
(right). The red dots show the breaking points on the periodic graph $\Gamma$. The green dashed line shows
the solitary wave solution of the stationary NLS equation (\ref{statNLS-eps}) on the infinite line.}
\label{fig:state1}
\end{figure}

\begin{figure}[htbp]
   \centering
   \includegraphics[width=3.in]{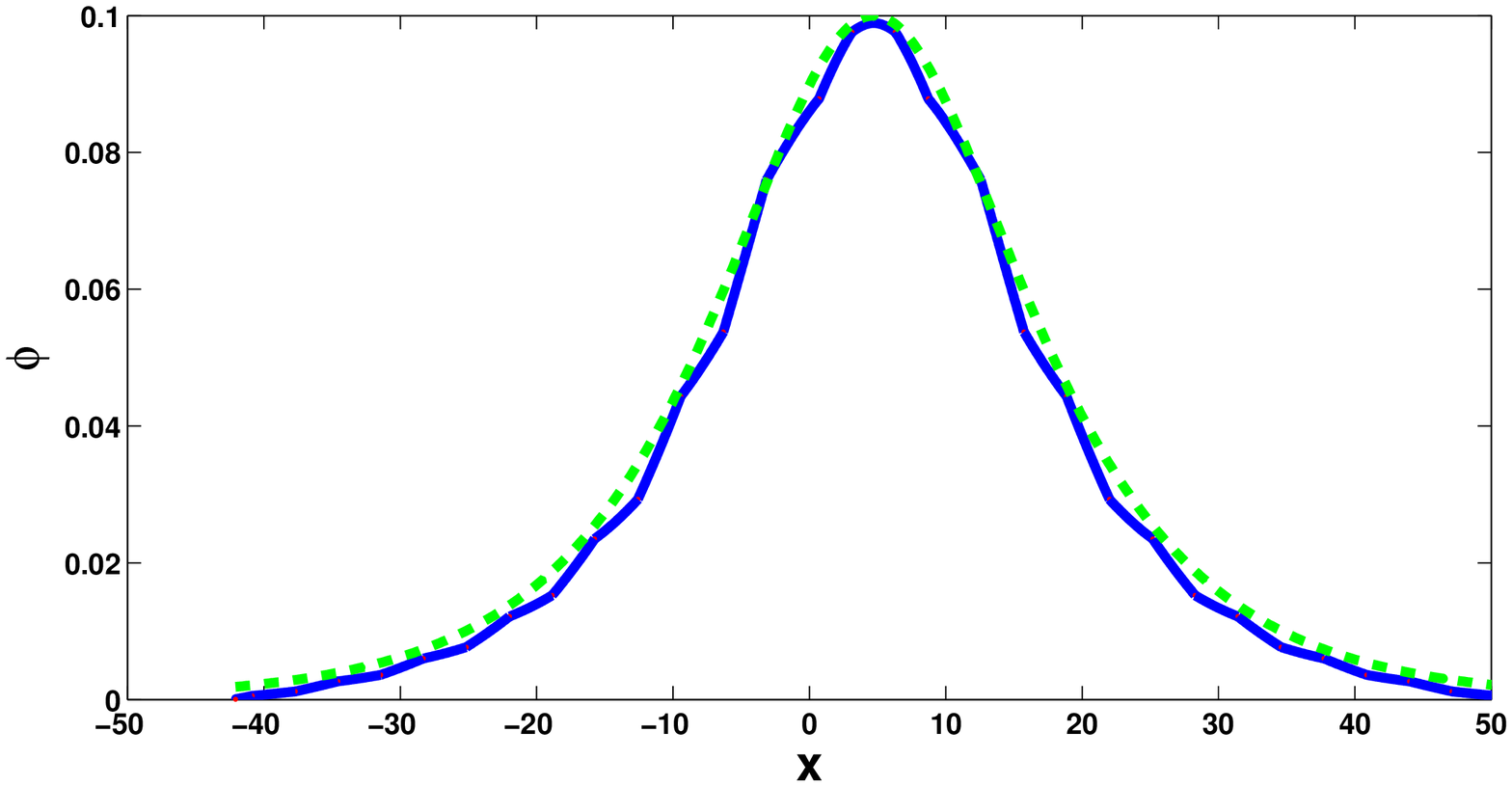}
   \includegraphics[width=3.in]{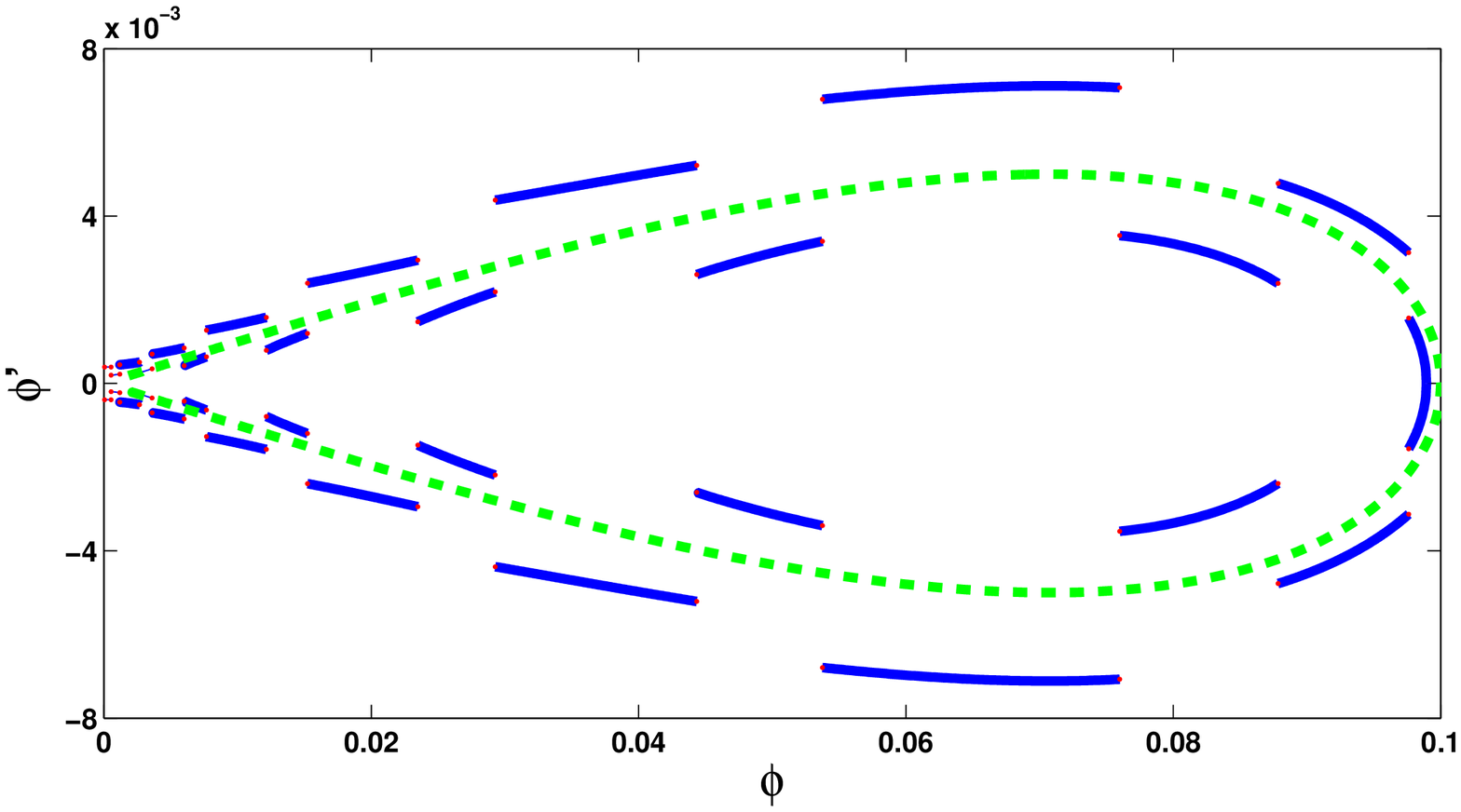}
\caption{The same as Figure \ref{fig:state1} but for the bound state with symmetry (\ref{branch-2}).}
\label{fig:state2}
\end{figure}

We end this work with remarks on energy estimates, which are useless to distinguish which of the two bound
states of Theorem \ref{theorem-main} serves a ground state. By definition, the ground state is a
bound state of minimal energy $E$ given by (\ref{energy}) under fixed mass $Q$ given by (\ref{charge}). Equivalently,
the ground state is a solution $\phi \in \mathcal{D} \subset L^2(\Gamma)$ to
the stationary NLS equation (\ref{statNLS-eps}) for fixed $\epsilon > 0$ that corresponds to the smallest value of mass $Q$.

Substituting the asymptotic representation of Corollary \ref{corollary-main} to the expression (\ref{charge})
and using smoothness of all quantities in $\epsilon$, we obtain the asymptotic representation for each
homoclinic orbit:
\begin{equation}
\label{Q-expansion}
Q = \epsilon^2 \sum_{n \in \mathbb{Z}} (L \alpha_n^2 + 2 \pi \gamma_n^2) +
\epsilon^3 \sum_{n \in \mathbb{Z}} (L^2 \alpha_n \beta_n + 2 \pi^2 \gamma_n \delta_n) + \mathcal{O}(\epsilon^4).
\end{equation}
Symmetry (\ref{symmetry-1}) of the first homoclinic orbit implies
$$
\alpha_{-n} = \gamma_n, \quad \beta_{-n} = -2 \delta_n, \quad n \in \mathbb{Z},
$$
which reduces the asymptotic expression (\ref{Q-expansion}) for $Q$ to the form
\begin{equation}
\label{expression-Q}
Q = (L+2\pi) \epsilon^2 \sum_{n \in \mathbb{Z}} \alpha_n^2 + (L^2-\pi^2) \epsilon^3 \sum_{n \in \mathbb{Z}} \alpha_n \beta_n + \mathcal{O}(\epsilon^4).
\end{equation}
On the other hand, symmetry (\ref{symmetry-2}) of the second homoclinic orbit implies
$$
\alpha_{-n} = \gamma_{n+1}, \quad \beta_{-n} = -2 \delta_{n+1}, \quad n \in \mathbb{Z},
$$
which results in the same expression (\ref{expression-Q}) for $Q$. Since $\{ (\alpha_n,\beta_n) \}_{n \in \mathbb{Z}}$ approaches the
same approximation (\ref{approximation-soliton}) as $\epsilon \to 0$, the value of $Q$ is identical
in the first two orders of $\epsilon$. In fact, we suspect that this value is only different by a term
that is exponentially small in $\epsilon$. As a result, we are not able to claim which bound state is
a ground state by using power expansions in $\epsilon$.

\begin{figure}[htbp]
   \centering
   \includegraphics[width=3.in]{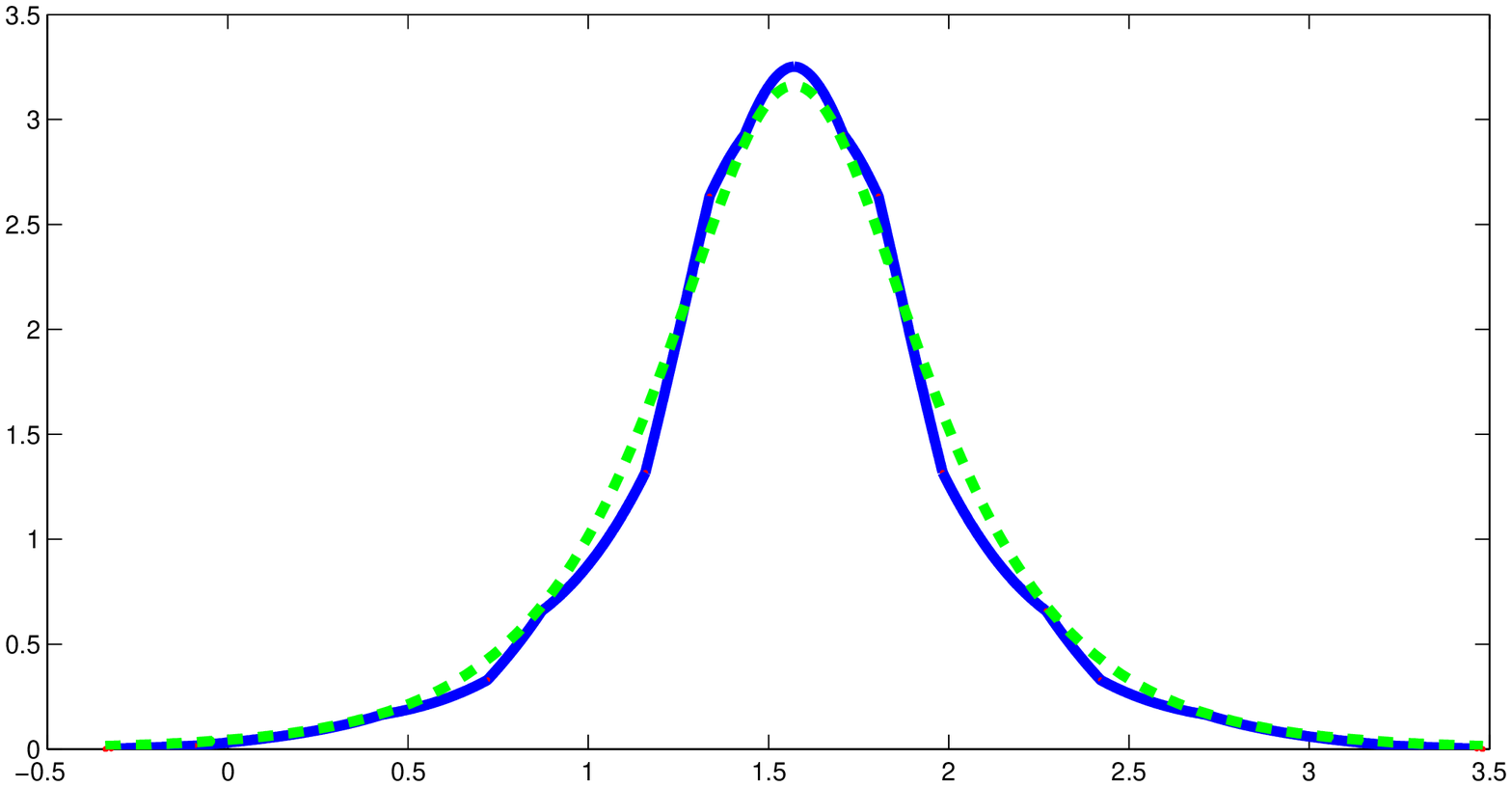}
   \includegraphics[width=3.in]{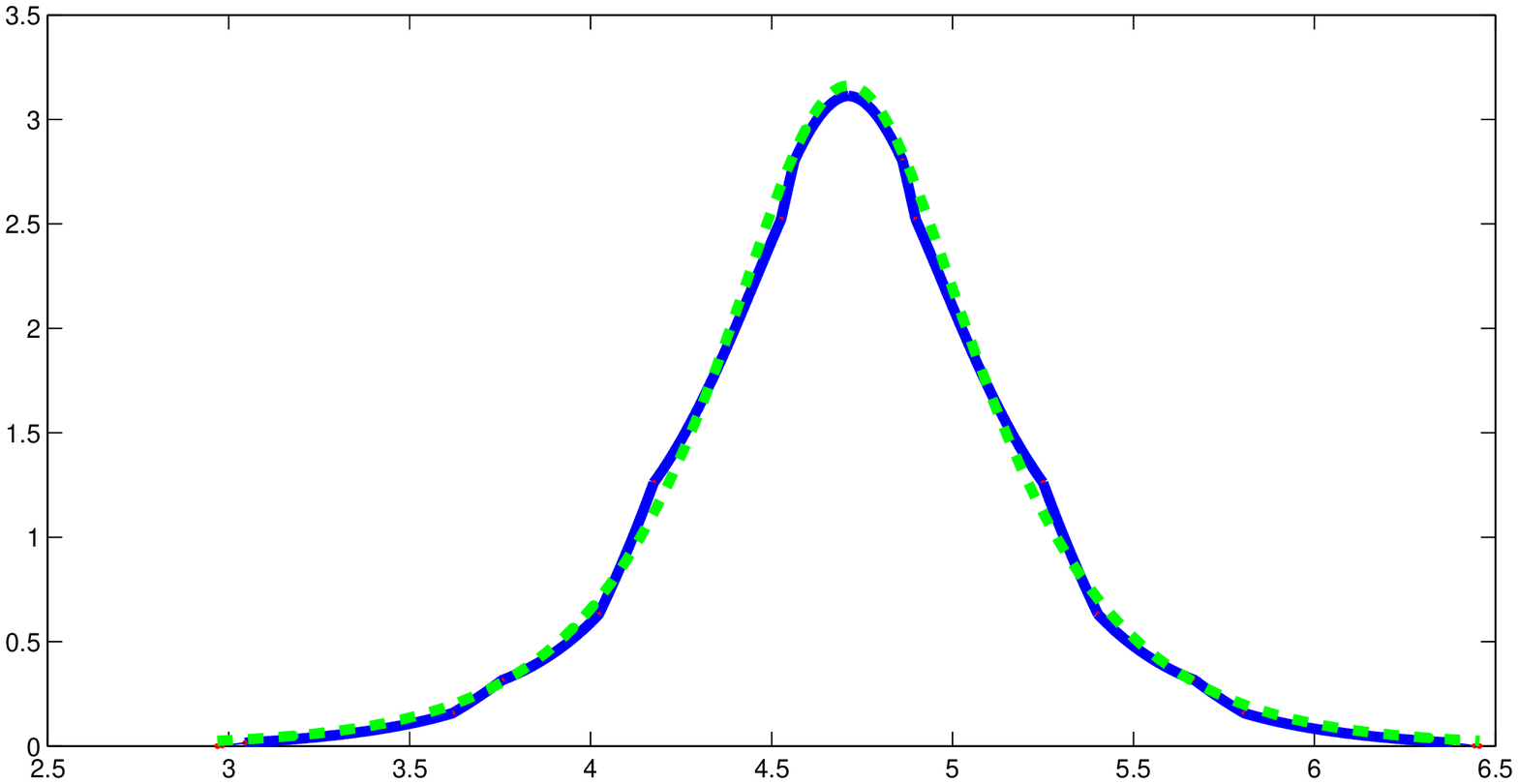}
\caption{Profile of the numerically generated bound state of the stationary NLS equation (\ref{statNLS})
with symmetries (\ref{branch-1}) (left) and (\ref{branch-2}) (right) for $L = \pi$ and $\Lambda = -10$.
The green dashed line shows the solitary wave solution of the stationary NLS equation (\ref{statNLS}) on the infinite line.}
\label{fig:state3}
\end{figure}

As is formulated in Remarks \ref{remark-1} and \ref{remark-2},
we anticipate that the bound state satisfying the symmetry (\ref{branch-1}) is a ground state of the
periodic metric graph. We also anticipate that both bound states are extended to all values of $\epsilon$,
that is, to the limit $\Lambda \to -\infty$ within the stationary NLS equation (\ref{statNLS}).
Indeed, using the same numerical method, we have confirmed that both bound states exists
in the stationary NLS equation (\ref{statNLS}) with $\Lambda = -10$.
Figure \ref{fig:state3} illustrates the profiles of the two bound states for $\Lambda = -10$.
In this case, the bound states become more concentrated at the nearest cells
to the symmetry centers of the periodic graph $\Gamma$.

\vspace{0.25cm}

{\bf Acknowledgement.} D. Pelinovsky is grateful to the Humboldt Foundation for sponsoring
his stay at the University of Stuttgart during June-July 2015.
The work of G. Schneider is supported by the Deutsche Forschungsgemeinschaft DFG through the
Research Training Center GRK 1838 ``Spectral Theory and Dynamics of Quantum Systems".

\end{document}